\documentclass[11pt]{amsart}

\usepackage{epsfig}
\usepackage{amsmath}
\usepackage{amssymb}
\usepackage{graphicx}

\usepackage{hyperref}
\usepackage{a4wide}
 \usepackage{amssymb,amscd, amsmath}
\usepackage{pst-node}
\usepackage[all,cmtip]{xy}

\theoremstyle{plain}
\newtheorem{prp}{Proposition}[section]
\newtheorem{lemma}[prp]{Lemma}
\newtheorem{stat}[prp]{Claim}

\newtheorem{theorem}[prp]{Theorem}
\theoremstyle{definition}

\newtheorem{remark}[prp]{Remark}

\newtheorem{cstr}[prp]{Construction}

\usepackage{indentfirst,latexsym,bm}

 \newcommand\soc{{\sf Soc}}
\newcommand\aut{{\sf Aut}} 
 
\newcommand\sym{{\sf Sym}}

\DeclareMathOperator{\Cos}{Cos}

\renewcommand{\leq}{\leqslant}

\newcommand{\F}{\mathrm{F}}
\newcommand{\E}{\mathrm{E}}

\usepackage[left=2cm,right=2cm,
    top=2cm,bottom=2.5cm,bindingoffset=0cm]{geometry}

\begin{document}

 \title[A new infinite family of covers of complete graphs that admit a half-transitive action...]
{A new infinite family of covers of complete graphs that admit a half-transitive action of a Suzuki simple group}

 \author[L.~Yu.~Tsiovkina]{L.Yu. Tsiovkina}
\address{N.N.~Krasovskii Institute of Mathematics and Mechanics of UB RAS, 16 S.~Kovalevskaya str., Yekaterinburg, 620990 Russia,\\
Ural Mathematical Center, Ural Federal University, 51 Lenina Str., Yekaterinburg, 620002 Russia}
 \email{l.tsiovkina@gmail.com}

\maketitle

\begin{abstract}
We construct a new infinite family of antipodal $2(q-1)$-covers of complete graphs
$K_{q^2+1}$ of diameter $3$, where $q=2^{2e+1}\ge 8$,  admitting a half-transitive action of  the Suzuki simple group  $Sz(q)$. We further show that each constructed cover  admits an involutory automorphism interchanging the two $Sz(q)$-orbits on arcs, which yields that its full automorphism group  is arc-transitive.
\end{abstract}

\vspace{2mm}

\noindent{\it Keywords:} antipodal cover of a complete graph, half-transitive group, edge-transitive graph, arc-transitive graph, Suzuki group, coset graph.
\medskip

\noindent{\it 2020 Mathematics Subject Classification:}
Primary 05E18; Secondary 05C25, 20B25, 20D06.

\section{Introduction}\label{Introduction}

Antipodal covers of complete graphs form an important class of graphs, which has links to coding theory (e.g., see \cite{KrotovPotapov}),  distance-regular graphs  and  systems of equiangular lines and tight frames (e.g. see \cite{CoutinhoGodsilShiraziZhan}). These connections, together with the rich symmetry properties of antipodal covers, provide further motivation for their construction and classification.
Considerable attention has been devoted to antipodal covers of complete graphs of diameter $3$ satisfying additional symmetry conditions, most notably edge-transitivity and distance-regularity; see \cite{Tsi22c} for an overview. In the present paper, we drop the distance-regularity assumption and study edge-transitive antipodal covers of diameter $3$ in a more general setting.

It is easy to see that every edge-transitive group $G$ of automorphisms of an antipodal diameter-$3$ cover induces a $2$-homo\-ge\-neous permutation group $G^{\Sigma}$ on the set $\Sigma$ of antipodal fibres. Such a permutation group is known to be  either of affine type or almost simple due to a combination of  Burnside's and Kantor's theorems.  In the case where $G^{\Sigma}$ is almost simple, all distance-regular covers were classified, and several new constructions of non-distance-regular covers admitting a simple arc-transitive group of automorphisms were obtained in \cite{Tsi22c}. In contrast, to the best of our knowledge, no infinite family of antipodal diameter-$3$ covers admitting a simple half-transitive group of automorphisms has previously been constructed. Recall that a group of automorphisms of a graph is called \emph{half-transitive} if it is transitive on both vertices and edges but not on arcs.

The purpose of the present paper is to construct such a family. More precisely, for every Suzuki simple group $Sz(q)$, where $q=2^{2e+1}\ge 8$, we construct an antipodal cover of $K_{q^2+1}$ of diameter $3$, with fibres of size $2(q-1)$, on which $Sz(q)$ acts half-transitively.

The construction may be viewed as an index-$2$ refinement of a known distance-regular antipodal cover associated with $Sz(q)$. Starting with a Sylow $2$-subgroup $S$ of $Sz(q)$ and passing to a subgroup $S_1$ of index $2$ in $S$, each right $S$-coset splits into two right $S_1$-cosets. The resulting coset graph retains the vertex- and edge-transitivity of the Suzuki group, while its arc-transitivity is lost. Thus, the construction provides a natural mechanism for obtaining a half-transitive action of a non-abelian finite simple group on an antipodal cover of a complete graph.

Interestingly, although the Suzuki group itself acts half-transitively on the resulting graph, the graph admits a larger arc-transitive group of automorphisms. This distinction between the action of the simple group $Sz(q)$ and the symmetry of the graph as a whole is one of the characteristic features of the construction.

We now describe the construction.

\begin{cstr}\label{cstr}
Let $$G=Sz(q), \qquad q=2^{2e+1}\ge 8, $$ and let $S$ be a Sylow $2$-subgroup of $G$, so that $|S|=q^2$. Put $$ M:=N_G(S).$$

Let $S_1$ be a subgroup of $S$ of index $2$.

Choose an involution $g\in G\setminus M$ and an element $y\in S\setminus S_1$ for which there exists an involution $a\in S$ such that $(y,a)$ is the unique pair of elements with these properties satisfying
\begin{equation}\label{StrIde}
gag=y^{-1}gy.
\end{equation}
By the structural properties of the Suzuki groups described in \cite[Section~6]{Suz62}, one then has
$$y^2=a\qquad\text{and}\qquad |yg|=4.$$

In particular, $y$ has order $4$.

Set
\begin{itemize}
\item[(i)] $\Omega:=G/S_1$, the set of right $S_1$-cosets in $G$;
\item[(ii)] $K:=M\cap M^g$;
\item[(iii)] $$D:=S_1(yg)S_1\cup S_1(yg)^{-1}S_1.$$

\end{itemize}

We identify $G$ with the permutation group induced by its right multiplication action on $\Omega$.

Since $[S:S_1]=2$ and $y\notin S_1$, we have $$S=S_1\cup S_1y.$$

Consequently, $$B:=\{S_1,S_1y\}$$ is a block of imprimitivity for the action of $G$ on $\Omega$. Let $\mathcal B$ denote the corresponding system of imprimitivity containing $B$. The action induced by $G$ on $\mathcal B$ is permutation-isomorphic to the natural right multiplication action of $G$ on $$\Omega':=G/S, $$ the set of right $S$-cosets in $G$.

For $x\in G$, write $$B^x=\{S_1x,S_1yx\}.$$

In particular,  $$B^f=\{S_1f,S_1yf\}\qquad (f\in K),$$ and $$ B^g=\{S_1g,S_1yg\}.$$
Finally, let $$\Gamma:=\Cos(G,S_1,D)$$ be the graph with vertex set $\Omega$ in which
$$S_1x\sim S_1z \quad\Longleftrightarrow\quad xz^{-1}\in D.$$

\end{cstr}

Our main result is the following.

\begin{theorem}\label{mainthm}
For every choice of $S$, $S_1$, $g$, and $y$ satisfying the conditions of Construction~\ref{cstr}, the graph $\Gamma$ is a $G$-half-transitive antipodal $2(q-1)$-cover of $K_{q^2+1}$ of diameter $3$. Moreover, the full automorphism group $\operatorname{Aut}(\Gamma)$ is arc-transitive and is isomorphic to a subgroup of
$Z_2\times \operatorname{Aut}(Sz(q)).$

\end{theorem}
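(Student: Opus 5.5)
We treat $\Gamma=\Cos(G,S_1,D)$ through its quotient by the block system $\mathcal B$ and relate it to the known distance-regular $(q-1)$-cover $\Gamma'=\Cos(G,S,S(yg)S\cup S(yg)^{-1}S)$ on $\Omega'=G/S$. The first checks are routine. $D=D^{-1}$ and $D\cap S_1=\emptyset$, since $yg\notin M$. $G$ is transitive on $\Omega$. Moreover, $G$ is edge-transitive because $D$ is the union of a double coset and its inverse: the edges $\{S_1,S_1x\}$ with $x\in S_1(yg)S_1$ form one $S_1$-orbit on arcs at $S_1$, and reversing gives the other. The arcs therefore split into at most two $G$-orbits, and $G$ is arc-transitive iff some element of $G$ swaps $S_1$ and $S_1yg$, i.e. iff $S_1ygS_1=S_1(yg)^{-1}S_1$. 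To prove half-transitivity we show this fails. Suppose $(yg)^{-1}=g y^{-1}=s(yg)t$ with $s,t\in S_1$. Taking images in $\Omega'$, the double cosets $S(yg)S$ and $S(yg)^{-1}S$ coincide (since $y\in S$), so no contradiction comes from there. Instead we use uniqueness in (\ref{StrIde}). From $gy^{-1}=sygt$ we get $g y^{-1}t^{-1}g = sy$. Together with $gSg\cap S$ trivial and the Bruhat-type uniqueness of the factorization $x=s_1 g h s_2$ for $x\in G\setminus M$ (with $h\in K$, $s_i\in S$), comparing the two factorizations of $g y^{-1}$ forces the $S$-components to match. In particular $y^{-1}\in S_1 y S_1\cdot(\text{something in }K)$, and then $y\in S_1$, a contradiction. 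The precise bookkeeping uses $y^2=a$ and $|yg|=4$ (so $(yg)^{-1}=(yg)^3$) to pin down the $K$-component as trivial. We expect this step to be the main obstacle.

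For the cover structure, we compute the valency and the neighbourhoods. We have $|D|=2|S_1ygS_1|$, and $S_1\cap S_1^{yg}\subseteq S\cap S^{g}=1$, so $|S_1ygS_1|=|S_1|^2$ and each double coset contributes $|S_1|=q^2/2$ neighbours, giving valency $q^2$. Next we show that the $q^2$ neighbours of $S_1$ lie in distinct fibres of the partition of $\Omega$ into $G$-images of the fibre $F=\{S_1x: x\in MS_1\ldots\}$, namely the union of the $S_1$-cosets in the $S$-cosets of the $K$-orbit. Here fibres are the sets $\{S_1 k: k\in S\cdot K\}$-translates, of size $2|K|=2(q-1)$, and there are $|G|/(2(q-1)|S_1|)=q^2+1$ of them. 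Each neighbour $S_1 s\,yg\,t$ maps to the block $B^{ygt}$, and these correspond to the $q^2$ points of $\Omega'\setminus\{S\}$ via $ygt\mapsto S(yg)t$. The point stabiliser $S$ in $Sz(q)$ on $q^2+1$ points is regular on the remaining points, so the neighbours hit each other fibre exactly once. Hence $\Gamma$ is an $r$-cover of $K_{q^2+1}$ with $r=2(q-1)$. We then verify that fibres are independent and that any two vertices of a fibre are at distance $3$, which follows from the cover property: an edge between two fibres is a perfect matching, and two vertices of one fibre have no common neighbour. This gives an antipodal cover of diameter $3$ (one needs connectivity, which holds since $\langle S_1,yg\rangle=G$ by maximality of $M$ and simplicity).

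Finally, for $\operatorname{Aut}(\Gamma)$, we exhibit the involution $\tau$ swapping the two arc orbits. The natural candidate is $\tau: S_1x\mapsto S_1\,\sigma(x)$ composed with the switch $S_1x\leftrightarrow S_1yx$ inside each block $B^x$. Concretely, we take $\tau(S_1x)=S_1 y x$. This map is well defined because $y$ normalises $S_1$, as $S_1\trianglelefteq S$ has index $2$. It commutes with right multiplication by $G$, fixes every block, and sends $D$-adjacency to adjacency via $y\,yg\,y^{-1}$-type identities. We check it interchanges $S_1ygS_1$ and $S_1(yg)^{-1}S_1$ using $y^2=a$ and (\ref{StrIde}). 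Then $\langle G,\tau\rangle$ is arc-transitive. For the upper bound, $\operatorname{Aut}(\Gamma)$ acts on the $q^2+1$ fibres $2$-transitively with socle $Sz(q)$, so its induced group lies in $\operatorname{Aut}(Sz(q))$. The kernel on fibres acts semiregularly on each fibre, and by centralising $G$ it is forced to be a subgroup of the centraliser of $G$ in $\sym(\Omega)$, which is $N_G(S_1)/S_1=S/S_1\cong Z_2$ (as $N_G(S_1)=S$, using that $K$ acts irreducibly). This gives the embedding into $Z_2\times\operatorname{Aut}(Sz(q))$.
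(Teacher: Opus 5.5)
\textbf{The gap: antipodality and diameter $3$.} You deduce antipodality and diameter $3$ from the cover property (``an edge between two fibres is a perfect matching, and two vertices of one fibre have no common neighbour''), and that deduction is invalid. The cover property only shows that two vertices of the same fibre are at distance at least $3$. It does not show they are at distance exactly $3$, nor that vertices in distinct fibres are at distance at most $2$. By the Godsil--Hensel criterion quoted in the paper, an $r$-cover of $K_n$ is antipodal of diameter $3$ if and only if any two non-adjacent vertices in distinct fibres have a common neighbour. For a $2$-fold lift of $\Gamma'$ this condition can genuinely fail. Fix $f\in K^\#$. Each of the $q+1$ common neighbours $Sgs$ ($\eta(s)=f$) of $S$ and $Sgf$ in $\Gamma'$ lifts to a common neighbour of $S_1$ and \emph{exactly one} of $S_1gf$, $S_1ygf$. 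Which one depends on the $S_1$-cosets containing $s$, $\sigma(s)$ and $\tau(s)^f$; these are the paper's sets $\mathcal M^i_{gf}$ and $\mathcal M^i_{ygf}$. If all $q+1$ lifts went the same way, one of these non-adjacent pairs in distinct fibres would have no common neighbour, and the construction would fail. Ruling this out for every $f$ and every index-$2$ subgroup $S_1$ is the heart of the paper (Lemma~\ref{Lemma5}). It needs the explicit matrix model of $S:K$, the grouping of the elements with $\eta(s)=f$ into sextets, and a counting argument: the $q/2$ elements $t^{\theta+1}$, $t\in Q^+$, cannot all lie in $\ker\nu\setminus\{0\}$, where $\nu(x)=\chi((\gamma+\gamma^{-1})x)$. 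Your proposal has nothing in place of this, so diameter $3$ and antipodality remain unproved.

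You also misplaced the difficulty. Your half-transitivity step is easy and correct. From $gy^{-1}t^{-1}g=sy$, the left side lies in $S^g$ and the right side in $S$, and $S\cap S^g=1$, so $sy=1$ and $y\in S_1$, a contradiction. No use of $y^2=a$ or $|yg|=4$ is needed. This is a valid, slightly cleaner alternative to the paper's argument via $N_G(K)=\langle K,g\rangle$.

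\textbf{The automorphism group.} Your involution $S_1x\mapsto S_1yx$ is the paper's $w$. It does swap the two $G$-orbits on arcs: it sends $(S_1,S_1yg)$ to $(S_1y,S_1g)$, which is the image of $(S_1,S_1(yg)^{-1})$ under right multiplication by $y$, using only $y^2\in S_1$. The upper bound, however, rests on three unproved points.
\begin{enumerate}
\item $\operatorname{Aut}(\Gamma)$ preserves the fibres only because they are the antipodal classes, which again needs the missing step.
\item ``The kernel centralises $G$'' needs an argument, e.g.\ that $G$ cannot act nontrivially on a group of order at most $2(q-1)<q^2+1$.
\item A $2$-transitive overgroup of $Sz(q)$ of degree $q^2+1$ could a priori contain $A_{q^2+1}$, so ``socle $Sz(q)$'' is not automatic. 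The paper avoids this by proving that the kernel is $R=\langle w\rangle$ with orbits the blocks of $\mathcal B$. Then $\operatorname{Aut}(\Gamma)/R$ acts on $\Gamma/\mathcal B=\Gamma'$, whose automorphism group is known to be $\operatorname{Aut}(Sz(q))$.
\end{enumerate}

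\textbf{Minor points.} Your placement of the neighbours is muddled. $\Omega'$ has $(q^2+1)(q-1)$ points, not $q^2+1$; it is on $G/M$ that $S$ is regular on the other $q^2$ points. The conclusion (one neighbour per other fibre) is nevertheless right. For connectivity, ``maximality of $M$'' alone does not give $\langle S_1,yg\rangle=G$. You also need that $M$ is the only maximal subgroup containing $S_1$, which holds because $|S_1|=q^2/2$ exceeds the $2$-part of every other maximal subgroup.
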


\section{Notation and preliminaries}

Throughout the paper, all graphs are finite, simple, and undirected. For a graph $\Gamma$, we denote its vertex set, edge set, and arc set by
$$V\Gamma,\qquad E\Gamma,\qquad A\Gamma,$$ respectively, and write $d_\Gamma(u,v)$ for the distance between vertices $u$ and $v$. For $i\ge 0$ and $v\in V\Gamma$, put
$$\Gamma_i(v):=\{u\in V\Gamma\mid d_\Gamma(u,v)=i\}.$$

If $X\le \operatorname{Aut}(\Gamma)$ is transitive on both vertices and edges but not on arcs, then $X$ is said to be \emph{half-transitive} on $\Gamma$.

A graph $\Gamma$ of diameter $d$ is called \emph{antipodal} if the relation
''to coincide or to be at distance $d$'' is an equivalence relation on $V\Gamma$. Its equivalence classes are called the \emph{antipodal classes} or \emph{fibres}.

An $r$-cover of $K_n$ is a graph whose vertex set is partitioned into $n$ fibres of size $r$ such that each fibre is a coclique and every two distinct fibres induce a perfect matching. Such a cover is antipodal of diameter $3$ if and only if every two non-adjacent vertices belonging to distinct fibres have a common neighbour; see \cite[Lemma~3.1]{GodsilHensel}.

For a group $X$, we write $$X^\#:=X\setminus\{1\}.$$

\begin{lemma}[\cite{Tsi15}]\label{Lemma1}
Let $\Gamma' := \Cos(G, S, g)$ be  the graph on the set $\Omega'$, in which
$$S x \sim S z \iff x z^{-1} \in S g S.$$ Then $\Gamma'$ is a distance-regular cover with parameters $(q^2 + 1, q - 1, q + 1)$, each fibre of which has the form
$$\{ S g^{\varepsilon} f s \in \Omega' \mid f \in K \},$$ where $s \in S$ and $\varepsilon \in \{0, 1\}$.
\end{lemma}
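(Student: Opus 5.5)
The plan is to verify the cover structure directly from the $2$-transitive action of $G$ on $G/M$, and then to reduce distance-regularity to a single counting statement inside $Sz(q)$, which is the hard step.

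\textbf{Setup.} We use the standard facts from \cite{Suz62}:
\begin{itemize}
\item $M=S\rtimes K$, where $K=M\cap M^g$ is cyclic of order $q-1$;
\item $G$ acts $2$-transitively on the $q^2+1$ cosets of $M$;
\item the involution $g$ normalizes (in fact inverts) $K$;
\item $S\cap M^g=1$.
\end{itemize}
Since $g$ is an involution, $(SgS)^{-1}=SgS$, so $\Gamma'$ is an undirected graph on which $G$ acts vertex-transitively by right multiplication. The natural fibres are the sets $\{Smx\mid m\in M\}=\{Sy\mid y\in Mx\}$, each of size $|M:S|=q-1$. The Bruhat decomposition $G=M\cup MgS$ (from $2$-transitivity) shows that each $Sx$ lies in some $Ms$ or $Mgs$ with $s\in S$. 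Using $M=SK$ and $Kg=gK$, we get $Ms=SKs$ and $Mgs=SgKs$. These are exactly the sets $\{Sg^\varepsilon fs\mid f\in K\}$ of the statement.

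\textbf{Cover property.} Each fibre is a coclique because $SgS\cap M=\emptyset$, as $g\notin M$. The valency is $|SgS|/|S|=|S:S\cap S^g|=q^2$, since $S\cap S^g\le S\cap M^g=1$. The neighbours of the vertex $S$ are the vertices $Sgs$ with $s\in S$. These lie in pairwise distinct fibres: $Mgs=Mgs'$ would force $s's^{-1}\in S\cap M^g=1$. So $S$ has exactly one neighbour in each of the other $q^2$ fibres. By vertex-transitivity, and since $G$ permutes the fibres, every two fibres induce a perfect matching. Hence $\Gamma'$ is a $(q-1)$-cover of $K_{q^2+1}$. Moreover, $G_S=S$ acts regularly on the neighbours $Sgs$, so $G$ is arc-transitive. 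Therefore the number $\lambda$ of common neighbours of adjacent vertices is constant.

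\textbf{Distance-regularity.} By \cite[Lemma~3.1]{GodsilHensel} and the usual parameter identities for covers, it remains to show that every pair of non-adjacent vertices in distinct fibres has exactly $c_2=q+1$ common neighbours. Such a pair is $(S,Sgf)$ with $f\in K^\#$; the value $\lambda=q^2-2-(q-2)(q+1)=q+1$ then follows by double counting. Here $G$ is not transitive on these pairs: the pairs $(S,Sgf)$ fall into $q-2$ families indexed by $f\in K^\#$. So I would count directly. The common neighbours of $S$ and $Sgf$ are the vertices $Sgs$ with $gf(gs)^{-1}=gfs^{-1}g\in SgS$. Equivalently, we count the $s\in S^\#$ such that $fs^{-1}\in gSgSg=S^gS^gS^g\cdot g$, that is, the number of factorizations of a fixed element of $M^g g$ of a prescribed shape.

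\textbf{Main obstacle.} This count is where the arithmetic of $Sz(q)$ enters. I would write $S=\{(\alpha,\beta)\}$ with Suzuki's explicit multiplication and $g$ as the standard involution swapping $\infty$ and $0$. I would then solve $g(\alpha,\beta)g\in S\,g\,S\cdot(\text{diag})$ explicitly, as in \cite[Section~6]{Suz62}, where elements $gsg$ are decomposed in $SgKS$. The expected result is that for each $f\in K^\#$ there are exactly $q+1$ solutions $s$, independent of $f$. A fallback is to count $\sum_{f\in K^\#}c_2(f)$ by double counting and then to show that the field-automorphism and $K$-conjugation symmetries force $c_2(f)$ to be constant. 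The second half of this fallback (constancy) is the delicate step.
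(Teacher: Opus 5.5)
You verify the cover structure correctly: the fibres via $G=M\cup MgS$, the cocliques, valency $q^2$ with exactly one neighbour of $S$ in each other fibre, and arc-transitivity of $G$. You also correctly reduce distance-regularity to showing that $S$ and $Sgf$ have $q+1$ common neighbours for every $f\in K^\#$. But that last step is the whole content of the lemma, and you do not prove it. The coordinate computation is only announced (``the expected result is\ldots''), and the fallback stops exactly at the constancy claim. Your fallback points in the right direction with $K$-conjugation, but not as a graph symmetry. For $h\in K^\#$, left multiplication by $h$ sends $SgS$ to $Sh^{-1}ghS=Sgh^{2}S\neq SgS$, because $h^{-1}gh=gh^{2}$, so it is not an automorphism of $\Gamma'$. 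Field automorphisms form a group of order $2e+1<q-2$, so they cannot fuse the $q-2$ classes of pairs $(S,Sgf)$ either. As written, distance-regularity is therefore not established.

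The missing idea is a short algebraic equivariance, not a computation. Since $S\cap M^g=1$, every element of $MgS$ is uniquely $s_1kgs_2$ with $s_1,s_2\in S$ and $k\in K$. Define $\sigma,\eta,\tau$ by $gsg=\sigma(s)\eta(s)g\tau(s)$ for $s\in S^\#$, as in the paper. Because $g$ inverts $K$, we get $gsf^{-1}g=\sigma(s)\,\eta(s)f^{-1}\,g\,\tau(s)^{f}$. Hence $Sgs$ is a common neighbour of $S$ and $Sgf$ if and only if $\eta(s)=f$; the case $s=1$ never qualifies, since $gf^{-1}g=f\in M$. So the count is $|\eta^{-1}(f)|$. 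For $h\in K$,
\[
gs^{h}g=(gh^{-1}g)(gsg)(ghg)=h\,\sigma(s)\eta(s)\,g\,\tau(s)\,h^{-1}=\sigma(s)^{h^{-1}}\,h^{2}\eta(s)\,g\,\tau(s)^{h^{-1}},
\]
so $\eta(s^h)=h^2\eta(s)$; this is item (5) of Lemma~\ref{Lemma3}. Since $|K|=q-1$ is odd, $h\mapsto h^2$ permutes $K$. Thus for any $f,f'\in K$ a suitable conjugation $s\mapsto s^h$ maps $\eta^{-1}(f)$ bijectively onto $\eta^{-1}(f')$. The $q-1$ fibres of $\eta$ therefore partition $S^\#$ into equal parts of size $(q^2-1)/(q-1)=q+1$. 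This gives $c_2=q+1$ for $f\neq 1$ and $a_1=q+1$ for $f=1$ at once, with no matrices and no double counting. The paper itself only cites this lemma from earlier work, but it uses exactly the description $\Gamma'_1(S)\cap\Gamma'_1(Sgf)=\{Sgs\mid s\in S^\#,\ \eta(s)=f\}$ and the count $q+1$ in the proof of Lemma~\ref{LemmaD=3}.

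There are also two minor slips. First, $\lambda=(q^2+1)-2-(q-2)(q+1)=q+1$, whereas your $q^2-2-(q-2)(q+1)$ equals $q$. Second, $gSgSg=S^{g}Sg$, not $S^{g}S^{g}S^{g}g$, which collapses to $gS$.
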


\begin{lemma}\label{st1htc}
$\Gamma$ is an undirected, connected, simple graph, and $G \leq \operatorname{Aut}(\Gamma)$ is its half-transitive group of automorphisms.
\end{lemma}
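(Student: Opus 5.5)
Our plan rests on the standard facts about coset graphs $\Cos(G,H,D)$ with $D$ a union of double cosets $HxH$: the graph is undirected iff $D=D^{-1}$, loopless iff $D\cap H=\emptyset$, and connected iff $\langle H,D\rangle=G$. In each case $G$ acts by right multiplication as a vertex-transitive group of automorphisms. First, $D=D^{-1}$ holds by construction. Next, $yg\notin S_1$, since otherwise $g\in y^{-1}S_1\subseteq S\le M$, contradicting $g\notin M$; hence $D\cap S_1=\emptyset$ and $\Gamma$ is simple. For connectivity, $\langle S_1,yg\rangle$ contains $yg$ and, because $|yg|=4$ and $y^2=a$, also products such as $(yg)^2$. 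We would rather argue via the quotient. The map $S_1x\mapsto Sx$ sends edges of $\Gamma$ to edges of $\Gamma'=\Cos(G,S,g)$, since $S(yg)S=SgS$ and $S(yg)^{-1}S=SgS$ (using $g=g^{-1}$). The graph $\Gamma'$ is connected by Lemma~\ref{Lemma1}, so $\langle S,g\rangle=G$. It then suffices to show $y\in\langle S_1,yg\rangle$. Since $S_1\trianglelefteq S$ has index $2$, this reduces to showing that the subgroup $L:=\langle S_1,yg\rangle$ meets $S\setminus S_1$. Equivalently, $L$ contains $S$, and hence also $g=y^{-1}(yg)$, so $L=G$. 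To produce such an element we would use identity (\ref{StrIde}): rewriting $gag=y^{-1}gy$ as $(yg)\,a\,(yg)^{-1}\cdot\ldots$, we would look for a word in $yg$ and elements of $S_1$ that lies in $S\setminus S_1$. Alternatively, since $L$ is a subgroup of $Sz(q)$ of order divisible by $|S_1|=q^2/2$ that is not contained in $M$ (because $yg\notin M$), we can invoke Suzuki's list of maximal subgroups. Every maximal subgroup other than $M$ has order coprime to $q^2/2$ beyond a small $2$-part (namely $Sz(q_0)$ subfields, $D_{2(q-1)}$, and $Z_{q\pm r+1}\rtimes Z_4$). Hence $L=G$. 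We consider this maximal-subgroup argument the cleanest route.

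Edge-transitivity: the edges through the vertex $S_1$ are $\{S_1,S_1d\}$ with $d\in D$. The arcs starting at $S_1$ split into two $S_1$-orbits, $S_1(yg)S_1$ and $S_1(yg)^{-1}S_1$. The arc $(S_1,S_1yg)$ is mapped by $(yg)^{-1}$ to $(S_1(yg)^{-1},S_1)$, which is the reverse of an arc of the second type. Hence every edge lies in the $G$-orbit of $\{S_1,S_1yg\}$, and $G$ is edge-transitive.

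Non-arc-transitivity is the main obstacle. We must show that $S_1(yg)S_1\ne S_1(yg)^{-1}S_1$, that is, $(yg)^{-1}=g y^{-1}\notin S_1ygS_1$. Suppose $gy^{-1}=s\,yg\,t$ with $s,t\in S_1$. Projecting to $S$-cosets via the Bruhat-type decomposition $G=M\cup SgM$ (with uniqueness of the expression $s'g m$), we compare the $S$-components: $gy^{-1}\in SgS$ must be written in normal form $u g m$ with $u\in S$ and $m\in M$. We would compute this using (\ref{StrIde}) and the uniqueness of the pair $(y,a)$ from Construction~\ref{cstr}; uniqueness is exactly what pins down the $S$-component of $gy^{-1}g\cdot g$. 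Concretely, $g y^{-1} g=(yg)^{-2}\,y\,g\ldots$ can be expressed through $gag=y^{-1}gy$, which gives $gy^{-1}=y^{-1}\cdot(y g y^{-1})$ with $ygy^{-1}\ldots$. The aim is to show that the $S$-part of $gy^{-1}$ modulo $S_1$ is $y^{-1}S_1=yS_1$ on one side, while the constraint forces the opposite parity from $s\,y\,t$ after accounting for the action of $K$ on $S/S_1$. Since $K\cong Z_{q-1}$ has odd order, it acts trivially on the $2$-element set $S/S_1$, so the parity is a well-defined invariant of the double coset. This yields the contradiction, and hence $G$ is half-transitive.
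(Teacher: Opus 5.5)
\textbf{Verdict: genuine gap.} Your treatment of undirectedness, looplessness, and vertex- and edge-transitivity is fine. But the step you yourself flag as the main obstacle, non-arc-transitivity, is not actually proved.

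You correctly reduce it to $gy^{-1}\notin S_1ygS_1$, and you correctly name the tool: uniqueness of the expression in $G=M\sqcup SgM$. After that the argument breaks down in three ways.
\begin{itemize}
\item It turns into unfinished manipulations with \eqref{StrIde} and the uniqueness of $(y,a)$, with expressions left open by ellipses.
\item It misreads the normal form of $gy^{-1}$. Its $S$-component is $u=1$; $y^{-1}$ is its $M$-component.
\item It rests the invariance of your ``parity'' on a false claim. $K$ does not act on $S/S_1$ at all, because it does not normalize $S_1$.
\end{itemize}
To see the last point, use the paper's coordinates. There $k(\kappa)$ maps $S_1=\{(\alpha,\beta)\mid\alpha\in H\}$ to $\{(\alpha,\beta)\mid\alpha\in\kappa H\}$. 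If $\kappa H=H$, then $H$ is a vector space over the subfield $\mathbb F_2(\kappa)$ of $\mathbb F_q$. The degree of that subfield divides $2e+1$, while $|H|=2^{2e}$, so $\kappa=1$. Thus $N_K(S_1)=1$, which the paper itself establishes when proving that the diameter is $3$. As written, the contradiction is asserted rather than derived.

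The repair is short and uses none of \eqref{StrIde}, the pair $(y,a)$, or $K$.
\begin{itemize}
\item Uniqueness of $ugm$ ($u\in S$, $m\in M$) holds because $ugm=u'gm'$ gives $u'^{-1}u\in S\cap M^g\le S\cap S^g=1$. Here a $2$-subgroup of $M^g$ lies in $S^g$, and Sylow $2$-subgroups of $Sz(q)$ are TI.
\item Suppose $gy^{-1}=s\,yg\,t$ with $s,t\in S_1$. Compare $1\cdot g\cdot y^{-1}$ with $(sy)\cdot g\cdot t$. Uniqueness gives $sy=1$, i.e.\ $y\in S_1$, a contradiction.
\item Equivalently, for $x=ugm\in SgS$ the pair $(S_1u,S_1m)$ is an invariant of $S_1xS_1$, since $S_1\trianglelefteq S$. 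It equals $(S_1y,S_1)$ for $yg$ and $(S_1,S_1y)$ for $gy^{-1}$.
\end{itemize}
Completed this way, your route is more elementary than the paper's. The paper shows that an edge-reversing element would lie in $N_G(K)=\langle K,g\rangle$, and then excludes $x=fg$.

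There is also a smaller omission in your connectivity argument. The conjugates $M^x$ with $x\notin M$ are also maximal and have order divisible by $q^2/2$. So ``every maximal subgroup other than $M$'' is not literally true. You can exclude them: $S_1\le M^x$ would force $S_1\le S^x$, hence $S_1\le S\cap S^x=1$, which is absurd. With that added, your order-divisibility argument is a clean substitute for the paper's appeal to $(yg)^2=aga$ avoiding $M$, $M^g$ and $N_G(K)$.
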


\begin{proof}
Since $D = D^{-1}$, $\Gamma$ is an undirected graph. Consider the element $y g$. Since the involution $y g y g = a g a$ is contained in neither of the maximal subgroups $M$, $M^g$, nor $N_G(K)$, by \cite{Suz62} we have $\langle S_1, y g \rangle = G$, hence $\Gamma$ is connected.

Furthermore, if an element $x$ inverts the edge $\{S_1, S_1 y g\}$, then its square belongs to the stabiliser of the two fibres $F(S_1)$ and $F(S_1 g)$ in $G$, i.e., to $K$. This implies $x \in N_G(K) = \langle K, g \rangle$, and $x = f g$ for some $f \in K$. But then $S_1 y g f g = S_1$, contradicting the initial assumption. Therefore, $G$ acts half-transitively.
\end{proof}

\begin{lemma}\label{Lemma2}
$\Gamma$ is a $2(q - 1)$-cover of the complete graph on $q^2 + 1$ vertices, with each fibre of the form
$$\{ S_1 y^{\delta} g^{\varepsilon} f s \in \Omega \mid f \in K, \delta \in \{0, 1\} \},$$ where $s \in S$ and $\varepsilon \in \{0, 1\}$.
\end{lemma}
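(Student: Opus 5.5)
The plan is to lift the cover structure of $\Gamma'$ from Lemma~\ref{Lemma1} through the natural projection $\pi:\Omega\to\Omega'$, $S_1x\mapsto Sx$. The fibres of $\pi$ are exactly the blocks $B^x$ of $\mathcal B$. Define the fibre of $\Gamma$ through $S_1x$ as $\pi^{-1}$ of the fibre of $\Gamma'$ through $Sx$. By Lemma~\ref{Lemma1}, that fibre of $\Gamma'$ is $\{Sg^\varepsilon fs\mid f\in K\}$. Since $S=S_1\cup S_1y$ and $S_1$ is normal in $S$ (it has index $2$), the preimage of $Sg^\varepsilon fs$ is $\{S_1g^\varepsilon fs,\ S_1yg^\varepsilon fs\}$. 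This gives the displayed form with $\delta\in\{0,1\}$, and the fibre has size $2|K|=2(q-1)$. There are $q^2+1$ such fibres, and these sets are blocks for $G$.

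Next I check that $\pi$ is a graph homomorphism. If $xz^{-1}\in D$, then $xz^{-1}\in S(yg)^{\pm1}S=SgS$, using $y\in S$ and $SgS=Sg^{-1}S$ since $g$ is an involution. Hence every edge of $\Gamma$ maps to an edge of $\Gamma'$. Two consequences follow:
\begin{itemize}
\item no edge lies inside a block, since $SgS\cap S=\emptyset$;
\item no edge joins two vertices of the same $\Gamma$-fibre, since $\Gamma'$-fibres are cocliques.
\end{itemize}
So each fibre of $\Gamma$ is a coclique.

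It remains to show that two distinct fibres $F_1,F_2$ induce a perfect matching. By transitivity of $G$ on ordered pairs of fibres (the action on fibres is $2$-transitive, as for $\Gamma'$), it is enough to treat the fibres of $S_1$ and $S_1g$.

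\emph{Valency.} I compute $|D|/|S_1|$. The double coset $S_1ygS_1$ has size $|S_1|^2/|S_1\cap (S_1)^{yg}|$. Here $S\cap S^{g}=1$, because $g\notin M$ and distinct Sylow $2$-subgroups of $Sz(q)$ intersect trivially (TI property). So the double coset has size $|S_1|^2$. Next, $S_1ygS_1\ne S_1(yg)^{-1}S_1$, because otherwise some element would invert the arc, and this was excluded in the proof of Lemma~\ref{st1htc}. Thus $|D|=2|S_1|^2$ and the valency is $2|S_1|=q^2$. Since there are $q^2$ fibres other than the one through $S_1$, the valency equals the number of other fibres.

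\emph{Perfect matching.} It now suffices to show that $S_1$ has at most one neighbour in each other fibre. Suppose $S_1$ had two neighbours in the same fibre. Their images under $\pi$ would both be $\Gamma'$-neighbours of $S$ lying in one $\Gamma'$-fibre. Since $\Gamma'$ is a cover, they would have the same image, so they would form a block $\{S_1z,S_1yz\}$ with both vertices adjacent to $S_1$. It therefore suffices to show that each block adjacent (under $\pi$) to $S$ receives exactly one edge from $S_1$. Counting: $S$ has $q^2$ neighbour blocks in $\Gamma'$, and $S_1$ has $q^2$ neighbours, none in its own block. So it is enough to show that no block contains two neighbours of $S_1$. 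I expect this to be the main obstacle. The approach is to suppose $S_1yz$ and $S_1z$ are both adjacent to $S_1$. This gives $z,\,yz\in D$, so $y\in DD^{-1}$ with a specific shape, e.g.\ $y\in S_1(yg)^{\pm1}S_1(yg)^{\mp1}S_1$ or the same-sign variant. Rewriting with $S=S_1\cup S_1y$, $S\cap S^g=1$, the identity $gag=y^{-1}gy$ and $y^2=a$, this reduces to an equation of the form $gs g=s'$ or $g s g = y^{-1}gy$-type relations. These are ruled out by the uniqueness of the pair $(y,a)$ in \eqref{StrIde} together with $y\notin S_1$.

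Once $G$ is transitive on pairs of fibres and the base pair induces a perfect matching, every pair of distinct fibres induces a perfect matching, and $\Gamma$ is a $2(q-1)$-cover of $K_{q^2+1}$.
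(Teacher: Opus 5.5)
Everything before the perfect-matching step is correct. The projection $\pi$ is a homomorphism onto $\Gamma'$, since $S(yg)^{\pm1}S=SgS$. The preimage fibres have the stated form and are cocliques, and your valency count $2|S_1|=q^2$ is right. The gap is the step that carries the real content of the lemma: that no block $\{S_1z,S_1yz\}$ contains two neighbours of $S_1$. You only sketch it, and you name the wrong tool. The uniqueness of the pair $(y,a)$ in \eqref{StrIde} plays no role here, and no relation of the shape $gsg=y^{-1}gy$ arises. As written, the perfect-matching claim is not proved.

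The TI property you already used closes it. The neighbours of $S_1$ are $S_1ygs$ and $S_1gy^{-1}s$ with $s\in S_1$, and they lie in the blocks over $Sgs$ and $Sg(y^{-1}s)$ respectively. As $s$ runs over $S_1$, the elements $s$ and $y^{-1}s$ together run over $S_1\cup S_1y=S$ exactly once. For $t,t'\in S$, the equality $Sgt=Sgt'$ means $g\,tt'^{-1}g\in S$, so $tt'^{-1}\in S\cap S^g=1$. Hence each of the $q^2$ blocks over $\Gamma'_1(S)$ contains exactly one neighbour of $S_1$. This gives injectivity and the valency at once, so your double-coset count and the appeal to Lemma~\ref{st1htc} become unnecessary. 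Following your own line instead also works: each of the four cases of $z,yz\in D$ gives an identity $u=gtg$ with $u,t\in S$, so $u=t=1$, and this forces $y\in S_1$, a contradiction.

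With this repair your argument is essentially the paper's. The paper checks that the edge $\{S,Sg\}$ of $\Gamma'$ lifts to exactly the two edges $\{S_1,S_1yg\}$ and $\{S_1y,S_1g\}$, then spreads this by edge-transitivity of $G$ on $\Gamma'$. You check injectivity of $\pi$ on $\Gamma_1(S_1)$ and spread it by vertex-transitivity. Both rest on $S\cap S^g=1$.
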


\begin{proof}
It is clear that, due to the transitivity of $G$ on $\Omega$, $\Gamma$ is a $2$-cover of the graph $\Gamma'$, since otherwise, the adjacency $S_1 \sim S_1 y$ would imply $y \in D$, which contradicts the choice of $g\notin M$.

Note that the edge $\{S, S g\}$ of $\Gamma'$ corresponds exactly to two edges $\{S_1, S_1 y g\}$ and $\{S_1 y, S_1 g\}$ in the extension $\Gamma$.
Since $\Gamma'$ is a $(q^2 + 1, q - 1, q + 1)$-cover, and given the transitivity of $G$ on the edges of $\Gamma'$, we conclude that $\Gamma$ is a $2(q - 1)$-cover of the complete graph on $q^2 + 1$ vertices. The partition $\Sigma = \{\{ S_1 y^{\delta} g^{\varepsilon} f s \in \Omega \mid f \in K, \delta \in \{0, 1\}  \}\ |\ s \in S,  \varepsilon \in \{0, 1\}\}$ forms the set of fibres of $\Gamma$, because each element of $\Sigma$ induces a $2(q - 1)$-coclique in $\Gamma$, and the union of any two distinct elements of $\Sigma$ induces a perfect matching.
\end{proof}

Let $\sigma, \tau : S^{\#} \to S$ and $\eta : S^{\#} \to K$ be three functions defined by the rule
\begin{equation}\label{tausigmaeta}
g s g = \sigma(s) \eta(s) g \tau(s)
\end{equation}
for all $s \in S^{\#}$.

\begin{lemma}[see  {\cite[Proposition 5]{Suz62}}]\label{Lemma3}
The functions $\sigma$, $\tau$, and $\eta$ satisfy the following conditions:
\begin{enumerate}
\item $\tau(\tau(s)) = s$,
\item $\sigma(s) = \tau(s^{-1})^{-1}$,
\item $\tau(h^{-1} s h) = h \tau(s) h^{-1}$,
\item $\sigma(h^{-1} s h) = h \sigma(s) h^{-1}$,
\item $\eta(h^{-1} s h) = h^2 \eta(s)$,
\item $\tau(\sigma(s)) = \eta(s)^{-1} \tau(s)^{-1} \eta(s)$,
\item $\tau(c s) = \eta(s) \tau(\tau(c) \sigma(s)) \eta(s)^{-1} \tau(s)$,
\end{enumerate}
where $s, c \in S$, $s c \neq 1$, and $h \in K$.
\end{lemma}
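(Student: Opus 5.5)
The plan is to derive all seven identities from the defining relation \eqref{tausigmaeta} together with the uniqueness of the Bruhat-type decomposition. Since $M=SK$ with $S\cap K=1$ and $G=M\cup SgM$, every element of $G\setminus M$ has a \emph{unique} expression $u\,k\,g\,v$ with $u,v\in S$ and $k\in K$. This follows from $M\cap M^g=K$, so $S\cap gMg=1$. Hence, for $s\in S^\#$ (so $gsg\notin M$), the triple $(\sigma(s),\eta(s),\tau(s))$ is well defined. Every identity will be proved by writing one element in this normal form in two ways and comparing components. The only other fact needed is that $N_G(K)=\langle K,g\rangle$ is dihedral, so $g$ inverts $K$: $gh=h^{-1}g$ for $h\in K$.

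\emph{Inversion, giving (2) and the auxiliary identity $\eta(s^{-1})=\eta(s)$.} I invert \eqref{tausigmaeta}:
\[
gs^{-1}g=\tau(s)^{-1}g\,\eta(s)^{-1}\sigma(s)^{-1}=\tau(s)^{-1}\,\eta(s)\,g\,\sigma(s)^{-1}.
\]
I then rewrite $\tau(s)^{-1}\eta(s)=\eta(s)\bigl(\eta(s)^{-1}\tau(s)^{-1}\eta(s)\bigr)$, which puts the right-hand side in normal form. Comparing components gives $\tau(s^{-1})=\sigma(s)^{-1}$, which is (2), and also determines $\sigma(s^{-1})$ and $\eta(s^{-1})$.

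\emph{Solving for $g\tau(s)g$, giving (1) and (6).} From \eqref{tausigmaeta} I get
\[
g\tau(s)g=\eta(s)^{-1}\sigma(s)^{-1}gs=\bigl(\eta(s)^{-1}\sigma(s)^{-1}\eta(s)\bigr)\,\eta(s)^{-1}\,g\,s .
\]
Uniqueness yields $\tau(\tau(s))=s$, which is (1), together with $\eta(\tau(s))=\eta(s)^{-1}$ and $\sigma(\tau(s))=\eta(s)^{-1}\sigma(s)^{-1}\eta(s)$. To get (6), I write $\sigma(s)=\tau(s^{-1})^{-1}$ using (2), apply the inversion identity and the $\tau\tau$ identity to $s^{-1}$, and compare.

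\emph{Conjugation by $h\in K$, giving (3)--(5).} I conjugate \eqref{tausigmaeta} by $h$ and use $h^{-1}g=gh$ and $gh^{-1}=hg$:
\[
g\,(hsh^{-1})\,g=h^{-1}\,(gsg)\,h^{-1}=(h^{-1}\sigma(s)h)\,\bigl(h^{-2}\eta(s)\bigr)\,g\,(h\tau(s)h^{-1}).
\]
Here $h^{-1}\eta(s)gh^{-1}=h^{-2}\eta(s)g$ uses that $K$ is abelian. Replacing $h$ by $h^{-1}$ gives exactly (3), (4) and (5).

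\emph{The cocycle relation (7).} This is the step I expect to be the main obstacle. I will compute
\[
g(cs)g=(gcg)(gsg)=\sigma(c)\eta(c)\,g\,\tau(c)\sigma(s)\,\eta(s)\,g\,\tau(s).
\]
I move $\eta(s)$ leftward through $g$ using inversion, so the middle becomes $g\,\tau(c)\sigma(s)\,g$ (after conjugating the $S$-element by $\eta(s)$). Provided $\tau(c)\sigma(s)\neq 1$, I apply \eqref{tausigmaeta} once more to this middle factor and collect everything into normal form. Reading off the right $S$-component should give $\tau(cs)=\eta(s)\,\tau(\tau(c)\sigma(s))\,\eta(s)^{-1}\tau(s)$. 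The delicate parts are twofold. First, the bookkeeping of the $K$-conjugations has to match (3)--(5). Second, I must check the degenerate case $\tau(c)\sigma(s)=1$: there $g(cs)g$ would lie in $SKS\cdot K\subseteq M$, which forces $cs=1$. So the hypothesis $sc\ne1$ excludes it (or, in Suzuki's convention, handles it separately).
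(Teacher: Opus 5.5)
Your strategy is the standard argument behind Suzuki's Proposition~5, which the paper cites without giving a proof of its own. It uses uniqueness of the expression $u\,k\,g\,v$ ($u,v\in S$, $k\in K$) for elements of $G\setminus M$, together with the fact that $g$ inverts $K$. Your treatment of (1), (7) and the degenerate case $\tau(c)\sigma(s)=1$ is sound.

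\textbf{The conjugation step is wrong and, as written, proves the wrong formula.} In fact $g(hsh^{-1})g=(ghg)(gsg)(gh^{-1}g)=h^{-1}(gsg)h$; your $h^{-1}(gsg)h^{-1}$ equals $g(hsh)g$. Using $g\,\tau(s)\,h=h^{-1}g\,(h^{-1}\tau(s)h)$ one gets
\[
g(hsh^{-1})g=(h^{-1}\sigma(s)h)\,(h^{-2}\eta(s))\,g\,(h^{-1}\tau(s)h),
\]
so the right $S$-component is $h^{-1}\tau(s)h$, not $h\tau(s)h^{-1}$. With your display, replacing $h$ by $h^{-1}$ gives $\tau(h^{-1}sh)=h^{-1}\tau(s)h$. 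This genuinely contradicts (3), because $K$ has odd order and acts fixed-point-freely on $S^{\#}$.

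This also affects (7) if you move $\eta(s)$ leftward and then invoke (3). It is cleaner to move it rightward. With $t=\tau(c)\sigma(s)$, use $g\,t\,\eta(s)\,g=(gtg)\,\eta(s)^{-1}$ and $g\,\tau(t)\,\eta(s)^{-1}=\eta(s)\,g\,\bigl(\eta(s)\tau(t)\eta(s)^{-1}\bigr)$. This gives (7) with no appeal to (3)--(5).

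\textbf{There is a smaller slip in the inversion step.} The expression $\tau(s)^{-1}\eta(s)\,g\,\sigma(s)^{-1}$ is already in the normal form $S\,K\,g\,S$. So you can read off $\sigma(s^{-1})=\tau(s)^{-1}$, $\eta(s^{-1})=\eta(s)$ and $\tau(s^{-1})=\sigma(s)^{-1}$ directly. Your rewriting moves the $K$-factor to the left of the $S$-factor, i.e.\ out of normal form. Identity (2) survives, since the right $S$-factor is unaffected. But if you read $\sigma(s^{-1})=\eta(s)^{-1}\tau(s)^{-1}\eta(s)$ from the rewritten form, your route to (6) outputs $\eta(s)^{-2}\tau(s)^{-1}\eta(s)^{2}$ instead of $\eta(s)^{-1}\tau(s)^{-1}\eta(s)$.

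With these two corrections, and with $s,c\in S^{\#}$ understood in (7), the proposal is a complete proof.
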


\begin{lemma}\label{Lemma4} For all $s\in S-\{1,y^{-1}\}$  and $h\in K$, the following identities are true:
\begin{enumerate}

\item $\sigma(a^h) = (y^{-1})^{h^{-1}}, \quad \tau(a^h) = y^{h^{-1}}, \quad \eta(a^h) = h^2,$
\item $\sigma(y^h) = (y^{-1}a)^{h^{-1}}, \quad \tau(y^h) = a^{h^{-1}}, \quad \eta(y^h) = h^2,$
\item $\sigma((y^{-1})^h) = a^{h^{-1}}, \quad \tau((y^{-1})^h) = (ya)^{h^{-1}}, \quad \eta({(y^{-1})}^h) = h^2,$
\item $\sigma(ys) = y^{-1} a \, \sigma(a \sigma(s)), \quad \tau(ys) = \tau(a \sigma(s))^{\eta(s)^{-1}} \tau(s), \quad \eta(ys) = \eta(a \sigma(s)) \eta(s),$
\item $\sigma(y (y^{-1})^h) = y \, \sigma(aa^{h^{-1}}), \quad \tau(y (y^{-1})^h) = \tau(aa^{h^{-1}})^{h^{-2}} (y^{-1})^{h^{-1}}, \quad \eta(y (y^{-1})^h) = \eta(aa^{h^{-1}}) h^2.$
\end{enumerate}
\end{lemma}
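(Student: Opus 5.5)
Everything is derived from the defining relation \eqref{tausigmaeta}, $gsg=\sigma(s)\eta(s)g\tau(s)$, together with the identity \eqref{StrIde} $gag=y^{-1}gy$, the relations $y^2=a$, $a^2=1$, and the rules of Lemma~\ref{Lemma3}. The key input is uniqueness of the decomposition: every element of $G\setminus M$ lies in $SKgS$ and factors uniquely as $s_1 h g s_2$ with $s_1,s_2\in S$, $h\in K$. Hence any identity $gsg=u\,h\,g\,v$ with $u,v\in S$, $h\in K$ immediately reads off $\sigma(s)=u$, $\eta(s)=h$, $\tau(s)=v$. For the conjugated arguments I use parts (3)--(5) of Lemma~\ref{Lemma3}. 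Once a statement is proved for $h=1$, these give it for $s^h$: $\sigma,\tau$ get conjugated by $h^{-1}$ and $\eta$ gets multiplied by $h^2$. So items (1)--(3) reduce to $h=1$.

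For (1) with $h=1$, I rewrite \eqref{StrIde} as $gag=y^{-1}\cdot 1\cdot g\,y$. This gives $\sigma(a)=y^{-1}$, $\eta(a)=1$ and $\tau(a)=y$ directly.

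For (2), I compute $gyg$. Since $g$ is an involution, $gag=y^{-1}gy$ yields $g=a\,y^{-1}gy\,a$. I would try instead to write $y=ay^{-1}$ (valid since $y^2=a$ and $a^2=1$), and use $\tau(\tau(s))=s$ together with $\tau(a)=y$ to get $\tau(y)=a$ at once. Then (2) of Lemma~\ref{Lemma3} gives $\sigma(y^{-1})=\tau(y)^{-1}=a$, which is the $\sigma$-part of (3). Similarly $\sigma(y)=\tau(y^{-1})^{-1}$. For $\tau(y^{-1})$ I would apply rule (7) with $c=y$, $s=a$: since $y^{-1}=ya$, this expresses $\tau(y^{-1})$ through the known values at $a$, namely $\eta(a)\tau(\tau(y)\sigma(a))\eta(a)^{-1}\tau(a)=\tau(ay^{-1})\,y=\tau(y)\,y=ay$. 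That gives $\tau(y^{-1})=ay=ya$, so $\sigma(y)=(ya)^{-1}=y^{-1}a$ ($a$ is central in $\langle y\rangle$). For the $\eta$-values I would use the relation $\eta(s)$ versus $\eta(\tau(s))$ that is implicit in inverting \eqref{tausigmaeta}: $g\tau(s)g=\eta(s)^{-1}\sigma(s)^{-1}g s^{-1}$ suitably rearranged, using that $K$ normalises $S$. Combined with $\eta(a)=1$, this yields $\eta(y)=\eta(y^{-1})=1$. A quick check that these values are consistent with rule (6) guards against sign/inverse errors.

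For (4), I apply rule (7) with $c=y$, $s$ arbitrary, and substitute $\tau(y)=a$. This gives $\tau(ys)=\eta(s)\tau(a\sigma(s))\eta(s)^{-1}\tau(s)$, which is the stated formula written as a conjugate. For $\sigma(ys)$ and $\eta(ys)$ I compute directly:
$$gysg=(gyg)(gsg)=\sigma(y)\eta(y)g\tau(y)\,\sigma(s)\eta(s)g\tau(s)=y^{-1}a\,g\,a\sigma(s)\eta(s)g\tau(s).$$
Then I move $\eta(s)$ past $g$ (as $g$ normalises $K$, inverting it) and apply \eqref{tausigmaeta} to $g\,(a\sigma(s))\,g$. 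Reading off via uniqueness gives the $\sigma$ and $\eta$ formulas; the hypothesis $s\ne y^{-1}$ ensures $a\sigma(s)\ne1$. Finally, (5) is (4) specialised to $s=(y^{-1})^h$, inserting the values from (3): $\sigma(s)=a^{h^{-1}}$ and $\eta(s)=h^2$, so that $y^{-1}a\,\sigma(a a^{h^{-1}})$ should simplify. I would double-check the stated $y\,\sigma(\cdot)$, using $y^{-1}a=y$ since $y^{-1}=ya$ and $a^2=1$.

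The main obstacle I expect is the bookkeeping of the $K$-parts: correctly commuting $\eta(s)$ past $g$ (whether $g$ inverts $K$, and the resulting $h$ versus $h^{-1}$ conventions), and getting the $\eta$-values in (2) and (3). I would settle these by first pinning down the action of $g$ on $K$ from $N_G(K)=\langle K,g\rangle$ dihedral, and then verifying each computed value against rules (1), (2), (6) of Lemma~\ref{Lemma3}.
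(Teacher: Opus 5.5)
Your proposal is correct. For items (1), (4) and (5) it is essentially the paper's argument. You read off $\sigma,\eta,\tau$ from the unique $SKgS$ form and transport to $s^h$ via Lemma~\ref{Lemma3}(3)--(5); the paper does this by hand using that $g$ inverts $K$. You then expand $gysg=(gyg)(gsg)$, reapply \eqref{tausigmaeta} to $g\,a\sigma(s)\,g$, and specialise to $s=(y^{-1})^h$ using $y^{-1}a=y$ and $ya=y^{-1}$. Your separate appeal to rule (7) for $\tau(ys)$ is redundant, since the same expansion yields all three components.

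Where you genuinely differ is (2)--(3). The paper gets these in one line from \eqref{StrIde}: $ygyg=y^2(y^{-1}gy)g=a(gag)g=aga$. Hence $gyg=y^{-1}a\,g\,a$, which gives $\sigma(y)=y^{-1}a$, $\eta(y)=1$ and $\tau(y)=a$ simultaneously. Item (3) then follows by inversion, $g(y^{-1})^hg=(gy^hg)^{-1}=a^{h^{-1}}h^2g(ya)^{h^{-1}}$. You instead assemble the $S$-components from the cited Suzuki rules ($\tau\circ\tau=\mathrm{id}$, $\sigma(s)=\tau(s^{-1})^{-1}$, and rule (7) with $c=y$, $s=a$), and then treat $\eta$ separately. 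These values all check out, but the route is longer and relies on Lemma~\ref{Lemma3} where the paper needs nothing beyond \eqref{StrIde}.

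Two slips need repair. First, your $\eta$-step: inverting \eqref{tausigmaeta} gives $g\tau(s)g=\eta(s)^{-1}\sigma(s)^{-1}g\,s$, not $g\,s^{-1}$. This yields $\eta(\tau(s))=\eta(s)^{-1}$, so $\eta(y)=\eta(\tau(a))=1$. For $\eta(y^{-1})$ you need one more observation. Either note that $\tau(y^{-1})=ya=y^{-1}$ forces $\eta(y^{-1})^2=1$, and $|K|=q-1$ is odd. Or use $gs^{-1}g=(gsg)^{-1}=\tau(s)^{-1}\eta(s)g\sigma(s)^{-1}$, i.e. $\eta(s^{-1})=\eta(s)$, which is exactly the paper's passage from (2) to (3). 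Second, your abandoned aside that $gag=y^{-1}gy$ yields $g=ay^{-1}gya$ is false. It would give $aga=gag$, so $(ag)^3=(aga)(gag)=1$, impossible since $3\nmid|Sz(q)|$. It plays no role in your argument, but it should be deleted.
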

\begin{proof}
 Obviously,
$$ga^h g = h g a g h^{-1} = h y^{-1} g y h^{-1} = (y^{-1})^{h^{-1}} h^2 g y^{h^{-1}}.$$
So $(1)$ follows from the structure identity \eqref{StrIde}.
 To prove $(2)$, it suffices to notice that
$$ygyg = y^2 y^{-1} g y g = a g a g = a g a.$$
Therefore,
$$g y^h g = h g y g h^{-1} = h y^{-1} a g a h^{-1} = (y^{-1} a)^{h^{-1}} h^2 g a^{h^{-1}}.$$
Similarly,
$$g (y^{-1})^h g = (h g y g h^{-1})^{-1} = a^{h^{-1}} h^2 g (y a)^{h^{-1}}.$$
This shows $(3)$.

From the above statements, it also follows that
$$\tau(aa^{h^{-1}}) = \tau(a^{h^{-1}} a) = \eta(a) \tau( \tau(a^{h^{-1}}) \sigma(a) ) \eta(a)^{-1} \tau(a) = \tau( y^h y^{-1} ),$$
and
$$\sigma(aa^{h^{-1}}) = \tau(aa^{h^{-1}})^{-1} = y^{-1} \tau( y^h y^{-1} )^{-1}.$$
 We also have
$$g y s g = y^{-1} a g a \sigma(s) g \eta(s)^{-1} \tau(s) =
y^{-1} a \sigma(a \sigma(s)) \eta(a \sigma(s)) \eta(s) g \tau(a \sigma(s))^{\eta(s)^{-1}} \tau(s).$$
In particular,
\begin{multline*}
g y (y^{-1})^h g =
y^{-1} a \sigma(a \sigma((y^{-1})^h)) \eta(a \sigma((y^{-1})^h)) \eta((y^{-1})^h) g \tau(a \sigma((y^{-1})^h))^{\eta((y^{-1})^h)^{-1}} \tau((y^{-1})^h) = \\
y \sigma(aa^{h^{-1}}) \eta(aa^{h^{-1}}) h^2 g \tau(aa^{h^{-1}})^{h^{-2}} (y^{-1})^{h^{-1}}.
\end{multline*}
This shows $(4)$ and $(5)$.
\end{proof}

\section{Proof of main results} \label{proof}
To determine the diameter of $\Gamma$ and some other its combinatorial properties, for each element $f \in K$, we will consider the following four sets:
$${\mathcal M}_{gf}^1 = \{ s \in {S_1}^\# \mid \eta(s) = f \text{ and } (\sigma(s) \in S_1, \ \tau(s)^f \in S_1 \text{ or } \sigma(s) \in S_1 y, \ \tau(s)^f \in S_1 y) \},$$
$${\mathcal M}_{gf}^2 = \{ r \in {S_1 y} \mid \eta(r) = f \text{ and } (\sigma(r) \in S_1 y, \ \tau(r)^f \in S_1 \text{ or } \sigma(r) \in S_1, \ \tau(r)^f \in S_1 y) \},$$
$${\mathcal M}_{ygf}^1 = \{ s \in {S_1}^\# \mid \eta(s) = f \text{ and } (\sigma(s) \in S_1 y, \ \tau(s)^f \in S_1 \text{ or } \sigma(s) \in S_1, \ \tau(s)^f \in S_1 y) \},$$
$${\mathcal M}_{ygf}^2 = \{ r \in {S_1 y} \mid \eta(r) = f \text{ and } (\sigma(r) \in S_1, \ \tau(r)^f \in S_1 \text{ or } \sigma(r) \in S_1 y, \ \tau(r)^f \in S_1 y) \}.$$

\begin{lemma}\label{Lemma5}
Let $h \in K^{\#}$ and $f = h^2$. Then
 the following statements hold.
\begin{enumerate}
\item If $y^h \in S_1$ and $y^{h^{-1}} \in S_1$, then
$$a^h, \quad y^h, \quad (y^{-1})^h \in {\mathcal M}_{gf}^1.$$
\item If $y^h \not\in S_1$ and $y^{h^{-1}} \in S_1$, then
$$a^{h} \in {\mathcal M}_{ygf}^1, \quad (y^{-1})^{h} \in {\mathcal M}_{gf}^2, \quad y^h \in {\mathcal M}_{ygf}^2.$$
\item If $y^h \in S_1$ and $y^{h^{-1}} \not\in S_1$, then
$$a^{h}, \quad y^h \in {\mathcal M}_{ygf}^1, \quad (y^{-1})^{h} \in {\mathcal M}_{gf}^1.$$
\item If $y^h \not\in S_1$ and $y^{h^{-1}} \not\in S_1$, then
$$a^{h} \in {\mathcal M}_{gf}^1, \quad y^h, \quad (y^{-1})^{h} \in {\mathcal M}_{gf}^2.$$
\end{enumerate}Moreover,
$${\mathcal M}_{gf}^1 \cup {\mathcal M}_{gf}^2 \neq \varnothing \iff {\mathcal M}_{ygf}^1 \cup {\mathcal M}_{ygf}^2 \neq \varnothing.$$
\end{lemma}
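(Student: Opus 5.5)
The plan is to read off the values of $\sigma,\tau,\eta$ on $a^h$, $y^h$, $(y^{-1})^h$ from Lemma~\ref{Lemma4}(1)--(3), and then sort each element into one of the four sets using only membership in $S_1$ or $S_1y$. By Lemma~\ref{Lemma4}, all three elements have $\eta$-value $h^2=f$, so the condition $\eta(\cdot)=f$ holds automatically.

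For the membership tests we use that $[S:S_1]=2$. Hence $S_1$ is normal in $S$, and $S/S_1\cong Z_2$ gives a homomorphism $\pi:S\to Z_2$ with $\pi(y)=1$. Since $a=y^2$, we get $\pi(a)=0$, so $a\in S_1$ and $a^{\pm h^{\pm1}}$-type conjugates need separate care. The key point is that $K$ normalizes $S$ but need not normalize $S_1$, so $\pi(x^{h})$ is not determined by $\pi(x)$. It is nevertheless a homomorphism in $x$: the map $x\mapsto\pi(x^h)$ is again a homomorphism $S\to Z_2$. Consequently $\pi(a^h)=2\pi(y^h)=0$, so $a^h\in S_1$ for every $h$, and likewise $a^{h^{-1}}\in S_1$. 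In the same way $\pi((y^{-1})^h)=\pi(y^h)$, $\pi((ya)^{h^{-1}})=\pi(y^{h^{-1}})$ and $\pi((y^{-1}a)^{h^{-1}})=\pi(y^{h^{-1}})$. One must also check that $\tau(s)^f$ is what appears: $\tau(a^h)^{h^2}=(y^{h^{-1}})^{h^2}=y^{h}$, and similarly $\tau(y^h)^f=a^h$ and $\tau((y^{-1})^h)^f=(ya)^h$. So the second test always involves a $K$-conjugate by $h$, while the $\sigma$-value involves conjugation by $h^{-1}$.

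Writing $\alpha=\pi(y^h)$ and $\beta=\pi(y^{h^{-1}})$, each element reduces to a parity check:
\begin{itemize}
\item For $a^h$: it lies in $S_1^\#$, with $\pi(\sigma)=\beta$ and $\pi(\tau^f)=\alpha$. It belongs to $\mathcal M_{gf}^1$ if $\alpha=\beta$, and to $\mathcal M_{ygf}^1$ otherwise.
\item For $y^h$: it lies in $S_1$ or $S_1y$ according to $\alpha$, with $\pi(\sigma)=\pi((y^{-1}a)^{h^{-1}})=\beta$ and $\pi(\tau^f)=\pi(a^h)=0$.
\item For $(y^{-1})^h$: the coset is again given by $\alpha$, with $\pi(\sigma)=\pi(a^{h^{-1}})=0$ and $\pi(\tau^f)=\pi((ya)^h)=\alpha$.
\end{itemize}
Running through the four cases $(\alpha,\beta)\in\{0,1\}^2$ against the definitions of the four $\mathcal M$ sets should reproduce items (1)--(4). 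One point to check is that $y^h\ne1$, which gives nonemptiness of $S_1^\#$ membership. This bookkeeping, matching the paper's case list exactly, is where errors are likeliest, and I would verify each case directly.

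For the final equivalence, the approach is to build a bijection or a parity-shifting correspondence between the sets. The natural candidate is $s\mapsto s^{-1}$: by Lemma~\ref{Lemma3}(2), $\sigma(s^{-1})=\tau(s)^{-1}$, and by (5)--(6) inversion interchanges the roles of $\sigma$ and $\tau$ up to $\eta$-conjugation. The approach I expect to work better is to use Lemma~\ref{Lemma4}(4): left multiplication by $y$ shifts the coset of $s$ and relates $\sigma(ys),\tau(ys),\eta(ys)$ to the data at $a\sigma(s)$, thereby toggling membership between the $gf$ and $ygf$ families. The cleanest route may be structural instead. The vertices $S_1gf$ and $S_1ygf$ lie in the same block $B^{gf}$. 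The sets $\mathcal M_{gf}$ and $\mathcal M_{ygf}$ encode the common neighbours of $S_1$ with $S_1gf$ and with $S_1ygf$, respectively. By Lemma~\ref{Lemma2} the fibre containing $S_1gf$ also contains $S_1ygf$, and the perfect-matching structure between fibres shows that a path of length $2$ from $S_1$ into this fibre exists to one vertex if and only if it exists to the other. Identifying exactly which fibre vertex each $\mathcal M$ set counts is the main obstacle. If that interpretation proves too delicate, I would fall back on the explicit case split above: whenever some $h$ with $h^2=f$ exists, items (1)--(4) give nonempty members on both sides, and otherwise I would analyse general $s$ via Lemma~\ref{Lemma4}(4).
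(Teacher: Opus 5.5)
Your parity bookkeeping for items (1)--(4) is correct and is the paper's Part~I. The observation that $x\mapsto\pi(x^h)$ is again a homomorphism $S\to Z_2$ makes it cleaner, and with $\alpha=\pi(y^h)$, $\beta=\pi(y^{h^{-1}})$ your three bullets do give all four cases.

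\textbf{The gap is the final equivalence, which is the substantive part of the lemma.} The four sets partition the $q+1$ elements of $\eta^{-1}(f)\subseteq S^\#$. Items (1)--(4) always put at least one of $a^h,y^h,(y^{-1})^h$ into $\mathcal M_{gf}^1\cup\mathcal M_{gf}^2$. So the equivalence is really the unconditional claim $\mathcal M_{ygf}^1\cup\mathcal M_{ygf}^2\neq\varnothing$, which is what later gives $d(S_1,S_1ygf)=2$. Your fallback says that (1)--(4) produce members on both sides, but that is false. In cases (1) and (4) all three explicit elements lie in the $gf$-sets, so nothing is obtained on the $ygf$ side. Your ``otherwise'' branch never arises either, since $f=h^2$ is given, and in any case every element of $K^\#$ is a square because $|K|=q-1$ is odd. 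Nothing in your argument excludes cases (1) and (4), and they are exactly what the paper's Part~II has to handle.

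None of your other routes works as stated.
\begin{itemize}
\item A bijection between the $gf$-sets and the $ygf$-sets cannot exist, because together they have $q+1$ elements, an odd number. This also rules out inversion $s\mapsto s^{-1}$, which preserves both $\eta$ and the $S_1$-coset.
\item Left multiplication by $y$ leaves the level set $\eta^{-1}(f)$, since Lemma~\ref{Lemma4}(4) gives $\eta(ys)=\eta(a\sigma(s))\eta(s)$.
\item The structural argument is circular. The perfect matchings only tell you that each of the $q+1$ lifts, starting at $S_1$, of the $2$-paths from $S$ to $Sgf$ in $\Gamma'$ ends at one of $S_1gf$ or $S_1ygf$. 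That both endpoints actually occur is precisely the claim. Equivalently, not every $4$-cycle of $\Gamma'$ with $S$ and $Sgf$ as opposite vertices lifts to a closed $4$-cycle.
\item The natural symmetry $w\colon S_1x\mapsto S_1yx$ does swap $S_1gf$ and $S_1ygf$, but it also moves $S_1$ to $S_1y$. So it only relates $d(S_1,S_1gf)$ to $d(S_1y,S_1ygf)$.
\end{itemize}

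What is missing is a global argument over all of $\eta^{-1}(f)$. The paper does this as follows.
\begin{itemize}
\item It writes the remaining $q-2$ elements as $\varphi(\pi(z))$ in the explicit $4$-dimensional matrix model and groups them into sextets under $\psi$ and $\varepsilon$.
\item It shows that $\mathcal M_{ygf}^1=\mathcal M_{ygf}^2=\varnothing$ would force $\chi\bigl((\gamma+\gamma^{-1})t^{\theta+1}\bigr)=0$ for every $t$ in the set $\{\kappa+\rho:\kappa^{\theta+1}+\rho^{\theta+1}=1\}$.
\item That set has $q/2$ elements, and $t\mapsto t^{\theta+1}$ is injective.
\item This is impossible, because the nonzero functional $x\mapsto\chi((\gamma+\gamma^{-1})x)$ has only $q/2-1$ nonzero zeros.
\end{itemize}
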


\begin{proof}
Fix an arbitrary element $h \in K$ and denote $f = h^2$.
Let $x_1 = a^{h}$, $x_2 = (y^{-1})^{h}$, and $x_3 = y^{h}$. Then, $\eta(x_i) = h^2 = f$ for all $i=1,2,3$, and,  by Lemma~\ref{Lemma4},
$$\sigma(x_1) = (y^{-1})^{h^{-1}} = (a y)^{h^{-1}} \in S_1 y^{h^{-1}}, \quad \tau(x_1)^f = y^{h^{-1}f} \in S_1 y^{h},$$
$$\sigma(x_2) = a^{h^{-1}} \in S_1, \quad \tau(x_2)^f = (y a)^{h^{-1}f} = (y a)^{h} \in S_1 y^{h},$$
$$\sigma(x_3) = y^{h^{-1}}, \quad \tau(x_3)^f = a^{h^{-1}f} = a^{h} \in S_1.$$

\medskip

I. First, we prove statements $(1)$--$(4)$.

\begin{enumerate}
\item

Suppose $y^{h^{-1}} \in S_1$.
Then we have $(y^{-1})^{h^{-1}} \in S_1$ and, by Lemma~\ref{Lemma4},
$$\sigma(x_1) \in S_1, \quad \tau(x_1)^f = y^{h^{-1}f} \in S_1 y^{h},$$
$$\sigma(x_2) \in S_1, \quad \tau(x_2)^f \in S_1 y^{h},$$
$$\sigma(x_3) \in S_1, \quad \tau(x_3)^f \in S_1.$$

\begin{enumerate}
 \item
Suppose $y^h \in S_1$. Then   $\tau(x_1)^f \in S_1$ and $\tau(x_2)^f \in S_1$. In this case,
$$x_1, x_2, x_3 \in {\mathcal M}_{gf}^1.$$

\item If $y^h \not\in S_1$, then $\tau(x_1)^f \in S_1 y$ and $\tau(x_2)^f \in S_1 y$, hence
$$x_1 \in {\mathcal M}_{ygf}^1, \quad x_2 \in {\mathcal M}_{gf}^2, \quad x_3 \in {\mathcal M}_{ygf}^2.$$
\end{enumerate}

\item Suppose $y^{h^{-1}} \in S_1 y$.
Then $(y^{-1})^{h^{-1}} \in S_1 y$, and,  by Lemma~\ref{Lemma4},
$$\sigma(x_1) \in S_1y, \quad \tau(x_1)^f = y^{h^{-1}f} \in S_1y^{h},$$
$$\sigma(x_2) \in S_1, \quad \tau(x_2)^f \in S_1y^{h},$$
$$\sigma(x_3) \in S_1y, \quad \tau(x_3)^f \in S_1.$$

\begin{enumerate}
 \item
If $y^h \in S_1$, then $\tau(x_1)^f \in S_1$ and $\tau(x_2)^f \in S_1$, hence
$$x_1, x_3 \in {\mathcal M}_{ygf}^1, \quad x_2 \in {\mathcal M}_{gf}^1.$$
\item If $y^h \not\in S_1$, then  $\tau(x_1)^f \in S_1 y$ and $\tau(x_2)^f \in S_1 y$, hence
$$y^{-1} x_2, y^{-1} x_3 \in S_1, \quad x_1 \in {\mathcal M}_{gf}^1, \quad x_2, x_3 \in {\mathcal M}_{gf}^2.$$
\end{enumerate}
\end{enumerate}
Thus the condition ${\mathcal M}_{ygf}^1 = {\mathcal M}_{ygf}^2 = \varnothing$ implies either $y^{h}, y^{h^{-1}} \in S_1$, or $y^{h}, y^{h^{-1}} \in S_1 y$.
\medskip

II. Next we prove that the case ${\mathcal M}_{ygf}^1= {\mathcal M}_{ygf}^2= \varnothing$ is impossible.

Recall that the group $S$ contains exactly $q+1$ elements $s$ such that $\eta(s)=f$. Moreover, all $q-2$ such elements $s$, distinct from $x_i$, have the form $s=(y(y^{-1})^k)^t$ for some $t,k \in K$, where $k\ne 1$, and belong to distinct $K$-orbits on $S$ (see \cite[Section 12]{Suz62}).
Put $\pi(z)= y(y^{-1})^z$ for $z \in K^{\#}$.
Since
\begin{equation}\label{lambda}z^{-1}a^zaz=a^x
\end{equation} for some $x \in K^{\#}$, then
$$\sigma(\pi(z))= y(y^{-1})^{x^{-1}z^2}, \quad \tau(\pi(z))= y^{x^{-1}}(y^{-1})^{z^{-1}}, \quad \text{and} \quad \eta(\pi(z))= x^2 z^{-2}.$$
Due to the regularity of the action (by conjugation) of $K$ on $Z(S)^{\#}$, the element $x$ is uniquely determined by $z$.
Then by Lemma~\ref{Lemma3} and the above formulas, for the element $s'=\pi(z)^{x^{-1}zh}$, we have $\eta(s')=f$,
$$\sigma(s')= \sigma(\pi(z))^{x z^{-1} h^{-1}}= y^{x z^{-1} h^{-1}} (y^{-1})^{z h^{-1}} \quad \text{and} \quad \tau(s')= \tau(\pi(z))^{x z^{-1} h^{-1}}= y^{z^{-1} h^{-1}} (y^{-1})^{z^{-2} x h^{-1}}.$$
This yields
$$\tau(s')^f = y^{z^{-1} h}(y^{-1})^{z^{-2} x h} = \left( y (y^{-1})^{x z^{-1}} \right)^{z^{-1} h}.$$

Now, let us apply the well-known $4$-dimensional matrix representation of the group $M=S:K$ over the field with $q=2^{2e+1}$ elements (see  \cite[Section 13]{Suz62}).
Let $w^2=2q$ and $\theta$ be the automorphism of the field $\F$, defined by the rule
$$
\theta: \alpha \mapsto \alpha^{w}.
$$
It is clear that $\theta^2$ maps the element $z$ to $z^2$.
Then, each element $s$ of the group $S$ corresponds to a matrix $(\alpha_{ij})$, denoted by $(\alpha,\beta)$, where $\alpha,\beta\in \F$, defined as follows:
$\alpha_{ij}=0$ if $i<j$, $\alpha_{ii}=1$, $\alpha_{21}=\alpha_{43}=\alpha$, $\alpha_{32}=\alpha^{\theta}$, $\alpha_{42}=\beta$,
$\alpha_{31}=\alpha^{1+\theta}+\beta$, $\alpha_{41}=\alpha^{2+\theta}+\alpha\beta+\beta^{\theta}$. In what follows, we identify such an element $s$ of $S$
with the corresponding pair $(\alpha,\beta)$.
Multiplication of elements $(\alpha,\beta)$ and $(\gamma,\delta)$ of the group $S$ is given by the rule
$$(\alpha,\beta) \cdot (\gamma,\delta) = (\alpha+\gamma, \alpha \gamma^{\theta} + \beta + \delta).$$
Also, each element from $Z(S)^{\#}$ corresponds to a matrix of the form $(0,\beta)$, where $\beta \in \F^*$.
We may assume further that the elements $y, y^{-1}$, and $a$ correspond to the matrices $(1,0)$, $(1,1)$, and $(0,1)$, respectively.

Next, to each element $\kappa$ in $\F^*$ corresponds a diagonal matrix $k(\kappa)$ with the elements $\zeta_1,\zeta_2,\zeta_3,\zeta_4$ in the main diagonal such that
$\zeta_1^{\theta}=\kappa^{1+\theta}$, $\zeta_2^{\theta}=\kappa$, $\zeta_3=\zeta_2^{-1}$ and $\zeta_4=\zeta_1^{-1}$. So conjugation of $(\alpha,\beta)$ by $k(\kappa)$
is given by the rule
$$k(\kappa)^{-1}(\alpha,\beta)k(\kappa) = (\kappa \alpha,\ \kappa^{1+\theta} \beta).$$

Since $Z(S)=S'\le S_1$ and $[S:S_1]=2$, there is an
$\mathbb F_2$-hyperplane $H\le\F$ such that
$$ S_1=\{(\alpha,\beta)\mid \alpha\in H,\ \beta\in\F\}.$$
Since $y=(1,0)\notin S_1$, we have $1\notin H$.
Choose a generator $\zeta$ of $\F^*$ and put $E=\zeta^{-1}H$.
Then $\dim_{\mathbb F_2}E=2e$, $H=\zeta E$, and
$$ S_1=\{(\zeta\varepsilon,\zeta^{\theta+1}\beta)\mid\varepsilon\in E,\ \beta\in\F\}.$$

Applying the above formulas, we obtain
 $$S_1 y = \{ (\zeta \alpha + 1,\ \zeta \alpha + \zeta^{\theta + 1} \beta) \mid \ \alpha \in \E,\ \beta \in \F \}.$$

For the element $h = k(\gamma) \in K^{\#}$, we have
\begin{equation*}
\begin{aligned}
y^{h^{-1}} &= (\gamma^{-1}, 0),\\
y^{h} &= (\gamma, 0),\\
 y \cdot (y^{-1})^{h}& = (1, 0)(1, 1)^{h} = (1, 0)(\gamma, \gamma^{1 + \theta}) = (1 + \gamma, \gamma^{\theta} + \gamma^{1 + \theta}).
\end{aligned}
\end{equation*}

For the element $z = k(\kappa) \in K^{\#}$, we have
\begin{equation}\label{eqpi}
\pi(z) = y \cdot (y^{-1})^{z} = (1 + \kappa, \kappa^{\theta} + \kappa^{1 + \theta}),
\end{equation}and
\begin{equation}\label{eqsigma}
\begin{aligned}
\sigma(\pi(z)) &= (1 + \lambda^{-1} \kappa^{2}, (\lambda^{-1} \kappa^{2})^{\theta} (1 + \lambda^{-1} \kappa^{2})),\\
\tau(\pi(z)) &= (\lambda^{-1} + \kappa^{-1}, \kappa^{-\theta} (\lambda^{-1} + \kappa^{-1})),
\end{aligned}
\end{equation}
where $\lambda$  is uniquely determined by the relation
\begin{equation}\label{eqlambda}
\lambda^{\theta + 1} = \kappa^{1 + \theta} + \kappa^{2 (1 + \theta)},
\end{equation} since
$x = k(\lambda)$ satisfies equation \eqref{lambda}.
Let us denote by $\varphi$ the map that assigns to the element $\pi(z)$ the element $\pi(z)^u$, where $u \in K$, such that $\eta(\pi(z)^u) = f$. Then, setting $\rho = \lambda \kappa^{-1}$, equation \eqref{eqlambda} is rewritten as
\begin{equation}\label{eqrho}\rho^{\theta+1} = \kappa^{\theta+1}\ + 1,
\end{equation} and, for the element
\begin{equation}\label{eqs'} s' := \pi(z)^{x^{-1} z h} = (\rho^{-1} \gamma (1 + \kappa), (\rho^{-1} \gamma)^{\theta+1} (\kappa^{\theta} + \kappa^{1 + \theta})),\end{equation}
we have $\eta(s') = f$, i.e., $\varphi(\pi(z)) = s'$.
Also,
\begin{equation}\label{eqsigma1}
\begin{split}
\sigma(s') &= \sigma(\pi(z))^{x z^{-1} h^{-1}} = (\lambda \kappa^{-1} \gamma^{-1} (1 + \lambda^{-1} \kappa^{2}),\ \lambda \kappa^{\theta - 1} \gamma^{-(\theta + 1)} (1 + \lambda^{-1} \kappa^{2})) \\
&= (\gamma^{-1} (\rho + \kappa),\ \kappa^{\theta} \gamma^{-(\theta + 1)} (\rho + \kappa))
\end{split}
\end{equation}
and
\begin{equation}\label{eqtau1}
\begin{split}
\tau(s')^{f} &= \tau(\pi(z))^{x z^{-1} h^{-1} \cdot h^{2}} = (\lambda \kappa^{-1} \gamma (\lambda^{-1} + \kappa^{-1}),\ (\lambda \kappa^{-1} \gamma)^{\theta+1} \kappa^{-\theta} (\lambda^{-1} + \kappa^{-1})) \\
&= (\kappa^{-1} \gamma (1 + \rho),\ (\rho \gamma)^{\theta+1} \kappa^{-\theta-1} (\rho^{-1} + 1)).
\end{split}
\end{equation}

Then
$s' \in \mathcal{M}_{gf}^1$ ($s' \in \mathcal{M}_{ygf}^2$) if and only if
\begin{equation}\label{condAmgf1}
\rho^{-1}(1+\kappa) \in \gamma^{-1}\zeta E \quad (\text{respectively, } \rho^{-1}(1+\kappa) \in \gamma^{-1}(\zeta E+1))
\end{equation}
and one of the following conditions holds:
\begin{equation}\label{eqmgf1}
\begin{cases}
\rho + \kappa \in \gamma \zeta E \\
\kappa^{-1}(1+\rho) \in \gamma^{-1} \zeta E
\end{cases},\\
\end{equation}
\begin{equation}\label{eqmgf1'}
\begin{cases}
\rho + \kappa \in \gamma (\zeta E + 1) \\
\kappa^{-1}(1+\rho) \in \gamma^{-1} (\zeta E + 1)
\end{cases}
\end{equation}

Similarly,
$s' \in \mathcal{M}_{ygf}^1$ ($s' \in \mathcal{M}_{gf}^2$) if and only if
\begin{equation}\label{condAmgf2}
\rho^{-1}(1+\kappa) \in \gamma^{-1} \zeta E \quad (\text{respectively, } \rho^{-1}(1+\kappa) \in \gamma^{-1} (\zeta E+1))
\end{equation}
and one of the following conditions holds:
\begin{equation}\label{eqmgf2}
\begin{cases}
\rho + \kappa \in \gamma (\zeta E + 1) \\
\kappa^{-1}(1+\rho) \in \gamma^{-1} \zeta E
\end{cases},\\
\end{equation}
\begin{equation}\label{eqmgf2'}
\begin{cases}
\rho + \kappa \in \gamma \zeta E \\
\kappa^{-1}(1+\rho) \in \gamma^{-1} (\zeta E + 1)
\end{cases}
\end{equation}

In the rest of the proof, we fix an arbitrary $\gamma\in F^*$ and suppose that ${\mathcal M}_{ygf}^i=\varnothing$ for all $i=1,2$, where $f=h^2$ and $h=k(\gamma)$. Then each element $s_1$ in $S_1^{\#}$ with $\eta(s_1)=f$ is contained in ${\mathcal M}_{gf}^1$, and each element $s_2$ in $S_1 y$ with $\eta(s_2)=f$ is contained in  ${\mathcal M}_{gf}^2$. Moreover, in this case, by the first part  of the lemma, we have either
$$\gamma, \gamma^{-1} \in \zeta E,$$
or
$$ \gamma, \gamma^{-1} \in \zeta E + 1.$$
In any case, $\gamma + \gamma^{-1} \in \zeta E$.

Next, consider the conditions:
\begin{equation}\label{rho^{-1}(1+kappa)}
s' \in S_1^{\#} \iff \rho^{-1}(1+\kappa) \in \gamma^{-1} \zeta E,
\end{equation}
\begin{equation}\label{rho+kappa}
\rho + \kappa \in \gamma \zeta E,
\end{equation}
\begin{equation}\label{kappa^{-1}(1+rho)}
\kappa^{-1}(1+\rho) \in \gamma^{-1} \zeta E.
\end{equation}

For $z_1 = z^{-1} x$, consider the element
$$\pi(z_1) = y (y^{-1})^{z^{-1} x} = (1 + \kappa^{-1} \lambda, \cdot).$$
We have
$$\kappa_1 = k(z_1) = \kappa^{-1} \lambda = \rho, \quad x_1 = x, \quad \rho_1 = \kappa,$$
and
\begin{equation*}\begin{aligned}
\pi(z_1)^{z_1 x_1^{-1} h} &= (\gamma \rho_1^{-1} (1 + \kappa_1), \cdot) = (\gamma \kappa^{-1} (1+\rho), \cdot),\\
\sigma(\pi(z_1)^{z_1 x_1^{-1} h}) &= (\rho' \gamma^{-1} (1 + \rho'^{-1} \kappa'), \cdot) = (\gamma^{-1} (\kappa + \rho), \cdot),\\
\tau(\pi(z_1)^{z_1 x_1^{-1} h})^f &= (\gamma \rho^{-1} (1 + \kappa), \cdot),\\
\eta(\pi(z_1)^{z_1 x_1^{-1} h})& = f.
\end{aligned}
\end{equation*}
It follows that $$\varphi(\pi(z_1)) = \pi(z_1)^{z_1 x_1^{-1} h} = \pi(z^{-1} x)^{z^{-1} h}.$$

Thus we shown the following claim.
\begin{stat}\label{claim} For each $\tilde{z}, \tilde{x}\in K$ and $\tilde{z}_1=\tilde{z}^{-1}\tilde{x}$ such that $\tilde{z}\ne 1$ and $a^{\tilde{x}}=\tilde{z}^{-1}{aa^{\tilde{z}}}\tilde{z}$, the following statements are true.
\begin{enumerate}
 \item[(i)]   $\varphi(\pi(\tilde{z}))$ ($\in S_1$) satisfies condition \eqref{eqmgf1} if and only if $\varphi(\pi(\tilde{z}_1))$   ($\in S_1$) satisfies    condition \eqref{eqmgf1};

 \item[(ii)]  $\varphi(\pi(\tilde{z}))$ ($\in S_1$) satisfies condition \eqref{eqmgf1'} if and only if $\varphi(\pi(\tilde{z}_1))$  ($\in S_1y$) satisfies condition \eqref{eqmgf2};

 \item[(iii)]  $\varphi(\pi(\tilde{z}))$ ($\in S_1 y$) satisfies condition \eqref{eqmgf2} if and only if $\varphi(\pi(\tilde{z}_1))$  ($\in S_1$) satisfies \eqref{eqmgf1'};

 \item[(iv)]  $\varphi(\pi(\tilde{z}))$ ($\in S_1 y$) satisfies condition \eqref{eqmgf2'} if and only if $\varphi(\pi(\tilde{z}_1))$  ($\in S_1y$) satisfies  condition \eqref{eqmgf2'}.
 \end{enumerate} (Note that here we use notation for $\kappa$ and $\lambda$ so that $k(\kappa)=\tilde{z}$, $k(\lambda)=\tilde{x}$ and $\rho:=\lambda\kappa^{-1}$).
 \end{stat}
\medskip
Now, for $z_2=z^{-1}$, consider the element $$\pi(z_2)=y(y^{-1})^{z^{-1}}=(1+\kappa^{-1},\cdot).$$
We have $a^{x_2}=za^{z^{-1}}az^{-1}=(z^{-1}a^zaz)^{z^{-3}}=c^{z^{-3}}$ when $x_2=xz^{-3}$, hence $$(\kappa_2,\rho_2)=(\kappa^{-1},\kappa^{-1}\rho),$$
and
\begin{equation*}\begin{aligned}
\pi(z_2)^{z_2x_2^{-1}h}&=(\gamma\rho^{-1}(1+\kappa),\cdot),\\
\sigma(\pi(z_2)^{z_2x_2^{-1}h})&=(\kappa^{-1}\gamma^{-1}(1+\rho),\cdot),\\
\tau(\pi(z_2)^{z_2x_2^{-1}h})^f&=(\gamma(\rho+\kappa),\cdot),\\
\eta(\pi(z_2)^{z_2x_2^{-1}h})&=f.\end{aligned}
\end{equation*}
Hence $\varphi(\pi(z_2))=\pi(z_2)^{z_2x_2^{-1}h}=\pi(z^{-1})^{z^2x^{-1}h}$.
On the other hand, let us consider the element $s'^{-1}$.
We have $\pi(z)^{-1}=y^zy^{-1}=\pi(z^{-1})^z$, and
$$s'^{-1}=(\pi(z)^{-1})^{x^{-1}zh}=\pi(z^{-1})^{x^{-1}z^2h}=(\rho^{-1}\gamma(\kappa+1),\cdot),$$
and
$$\eta(s'^{-1})=\eta(s')=f=\eta(\pi(z^{-1})^{x^{-1}z^2h})=fx^{-2}z^{4}\eta(\pi(z^{-1})).$$
Therefore, $(xz^{-2})^2= \eta(\pi(z^{-1})).$
Thus $\varphi(\pi(z^{-1}))=s'^{-1}=\varphi(\pi(z))^{-1}$.
Hence
$$\varphi(\pi(z))\in S_1 \iff \varphi(\pi(z^{-1}))\in S_1.$$

\medskip

Next, for $z_3=zx^{-1}=z_1^{-1}$, consider the element
$$\pi(z_3)=y(y^{-1})^{z_1^{-1}}=(1+\rho^{-1},\cdot).$$
We have $a^{x_3}= z_3^{-1}a^{z_3}az_3$ when $x_3=x^{-2}z^{3}$, so $$(\kappa_3,\rho_3)=(\kappa_1^{-1},\kappa_1^{-1}\rho_1)=(\rho^{-1},\rho^{-1}\kappa),$$ and
\begin{equation*}\begin{aligned}
\pi(z_3)^{z_3x_3^{-1}h}&=(\gamma\rho\kappa^{-1}(1+\rho^{-1}),\cdot)=(\gamma\kappa^{-1}(1+\rho),\cdot),\\
\sigma(\pi(z_3)^{z_3x_3^{-1}h})&=(\gamma^{-1}\rho^{-1}(\kappa+1),\cdot),\\
\tau(\pi(z_3)^{z_3x_3^{-1}h})^f&=(\gamma\rho(1+\rho^{-1}\kappa),\cdot)=(\gamma(\rho+\kappa),\cdot),\\
\eta(\pi(z_3)^{z_3x_3^{-1}h})&=f.\end{aligned}
\end{equation*}

Similarly, for $z_4 = z_2^{-1} x_2 = x z^{-2}$, consider the element
$$\pi(z_4) = y ( y^{-1} )^{x z^{-2}} = (1 + \rho \kappa^{-1}, \cdot).$$
We have $a^{x_4} = z_4^{-1} a^{z_4} a z_4$ with $x_4 = x z^{-3}$, hence
$$(\kappa_4, \rho_4) = (\rho_2, \kappa_2) = (\kappa^{-1} \rho, \kappa^{-1}),$$
and
\begin{equation*}\begin{aligned}
\pi(z_4)^{z_4 x_4^{-1} h}& = (\gamma \kappa (1 + \rho \kappa^{-1}), \cdot) = (\gamma (\kappa + \rho), \cdot),\\
\sigma(\pi(z_4)^{z_4 x_4^{-1} h}) &= (\gamma^{-1} \kappa^{-1} (1 + \rho), \cdot),\\
\tau(\pi(z_4)^{z_4 x_4^{-1} h})^f& = (\gamma \kappa \rho^{-1}(1 + \kappa^{-1}), \cdot) = (\gamma \rho^{-1} (\kappa + 1), \cdot)\\
\eta(\pi(z_3)^{z_3 x_3^{-1} h})& = f.
\end{aligned}
\end{equation*}

Finally, for $z_5 = z_3^{-1} x_3 = x^{-1} z^{2} = z_4^{-1}$, consider the element
$$\pi(z_5) = y ( y^{-1} )^{x^{-1} z^{2}} = (1 + \kappa \rho^{-1}, \cdot).$$
We have $a^{x_5} = z_5^{-1} a^{z_5} a z_5$ with $x_5 = x^{-2} z^{3}$, so
$$(\kappa_5, \rho_5) = (\rho_3, \kappa_3) = (\rho^{-1} \kappa, \rho^{-1}) = (\kappa_4^{-1}, \kappa_4^{-1} \rho_4),$$
and
\begin{equation*}\begin{aligned}
\pi(z_5)^{z_5 x_5^{-1} h} &= (\gamma \rho (1 + \kappa \rho^{-1}), \cdot) = (\gamma (\rho + \kappa), \cdot),\\
\sigma(\pi(z_5)^{z_5 x_5^{-1} h}) &= (\gamma^{-1} \rho^{-1} (\kappa + 1), \cdot),\\
\tau(\pi(z_5)^{z_5 x_5^{-1} h})^f &= (\gamma \rho \kappa^{-1}(1 + \rho^{-1}), \cdot) = (\gamma \kappa^{-1} (\rho + 1), \cdot),\\
\eta(\pi(z_5)^{z_5 x_5^{-1} h}) &= f.
\end{aligned}
\end{equation*}
\medskip

Denote by $\psi$ the mapping that assigns to each element $\varphi(\pi(\tilde{z}))$, where $\tilde{z} \in K^{\#}$, the element $\varphi(\pi(\tilde{z}^{-1} \tilde{x}))$, where $\tilde{x} \in K$, which is uniquely determined by $\tilde{z}$ via the relation $\tilde{z}^{-1} a^{\tilde{z}} a \tilde{z} = a^{\tilde{x}}$. Also, denote by $\varepsilon$  the mapping that assigns to each element $\varphi(\pi(\tilde{z}))$, where $\tilde{z} \in K^{\#}$, the element $\varphi(\pi(\tilde{z}^{-1}))$.
\medskip

Now let us consider the sextet $\{s_0,s_1,\ldots,s_5\}$ of elements in $S$, defined as follows:
$$s_0=\varphi(\pi(z)),\quad s_1=\psi(s_0)=\varphi(\pi(z^{-1}x)),\quad s_3=\varepsilon(s_1)=\varphi(\pi(zx^{-1})),$$
$$s_2=\varepsilon(s_0)=\varphi(\pi(z^{-1})),\quad s_4=\psi(s_2)=\varphi(\pi(z^{-2}x)),\quad s_5=\varepsilon(s_4)=\psi(s_3)=\varphi(\pi(x^{-1}z^2)).$$

$$\xymatrix{
s_0 \ar[r]^{\psi} \ar[d]^{\varepsilon}  & s_1 \ar[r]^{\varepsilon} & s_3 \ar[d]^{\psi} \\
s_2\ar[r]^{\psi} & s_4\ar[r]^{\varepsilon} & s_5}$$
\medskip

From   the shown above, only four cases are possible for $\{s_0,s_1,\ldots,s_5\}$
(the notation "$s_i \in (x)$" below means that, for the element $s_i$, the condition $(x)$   of the Claim \ref{claim} applied to $s_i=\varphi(\pi(\tilde{z}))$ is satisfied).
They are as follows.
\medskip

\underline{Case 1:}
$$\xymatrix{
s_0 \in (10) \ar[r]^{\psi} \ar[d]^{\varepsilon}  & s_1\in (10) \ar[r]^{\varepsilon} & s_3 \in (11) \ar[d]^{\psi} \\
s_2\in (11) \ar[r]^{\psi} & s_4 \in (13) \ar[r]^{\varepsilon} & s_5\in (13)
}$$
In this case, we have:
\begin{equation*}
\left\{
\begin{aligned}
\gamma\rho^{-1}(\kappa+1) &\in \zeta E \\
\gamma^{-1}(\rho+\kappa) &\in \zeta E \\
\gamma\kappa^{-1}(\rho+1) &\in \zeta E \\
\end{aligned}
\right.,
\quad
\left\{
\begin{aligned}
\gamma\kappa^{-1}(\rho+1) &\in \zeta E \\
\gamma^{-1}(\rho+\kappa) &\in \zeta E \\
\gamma\rho^{-1}(\kappa+1) &\in \zeta E \\
\end{aligned}
\right.,
\quad
\left\{
\begin{aligned}
\gamma\kappa^{-1}(\rho+1) &\in \zeta E \\
\gamma^{-1}\rho^{-1}(\kappa+1) &\in \zeta E +1 \\
\gamma(\rho+\kappa) &\in \zeta E +1 \\
\end{aligned}
\right.,
\end{equation*}
\begin{equation*}
\left\{
\begin{aligned}
\gamma\rho^{-1}(\kappa+1) &\in \zeta E \\
\gamma^{-1}\kappa^{-1}(\rho+1) &\in \zeta E +1 \\
\gamma(\rho+\kappa) &\in \zeta E +1 \\
\end{aligned}
\right.,
\quad
\left\{
\begin{aligned}
\gamma(\rho+\kappa) &\in \zeta E +1 \\
\gamma^{-1}\kappa^{-1}(\rho+1) &\in \zeta E +1 \\
\gamma\rho^{-1}(\kappa+1) &\in \zeta E \\
\end{aligned}
\right.,
\quad
\left\{
\begin{aligned}
\gamma(\rho+\kappa) &\in \zeta E +1 \\
\gamma^{-1}\rho^{-1}(\kappa+1) &\in \zeta E +1 \\
\gamma\kappa^{-1}(\rho+1) &\in \zeta E \\
\end{aligned}
\right.,
\end{equation*}
which implies
$\rho + \kappa \in \gamma \zeta E \cap \gamma^{-1} (\zeta E + 1)$.
\medskip

\underline{Case 2:} For all $s_i$, $s_i\in \eqref{eqmgf1}$. In this case, $\rho + \kappa \in \gamma \zeta E \cap \gamma^{-1} \zeta E$.

\medskip

\underline{Case 3:}
$$\xymatrix{
s_0 \in (14) \ar[r]^{\psi} \ar[d]^{\varepsilon}  & s_1\in (14) \ar[r]^{\varepsilon} & s_3 \in (13) \ar[d]^{\psi} \\
s_2\in (13) \ar[r]^{\psi} & s_4 \in (11) \ar[r]^{\varepsilon} & s_5\in (11)
}
$$

In this case, we have
 \begin{equation*}
         \left\{\begin{aligned}
              \gamma\rho^{-1}(\kappa+1)&\in \zeta E+1   \\
           \gamma^{-1}(\rho+\kappa) & \in \zeta E\\
           \gamma\kappa^{-1}(\rho+1)&\in \zeta E+1
                \end{aligned}\right.,
\quad        \left\{\begin{aligned}
              \gamma\kappa^{-1}(\rho+1)&\in \zeta E +1     \\
           \gamma^{-1}(\rho+\kappa) & \in \zeta E\\
           \gamma\rho^{-1}(\kappa+1)&\in \zeta E+1
                \end{aligned}\right.,
   \quad                \left\{\begin{aligned}
              \gamma\kappa^{-1}(\rho+1)&\in \zeta E +1     \\
           \gamma^{-1}\rho^{-1}(\kappa+1) & \in \zeta E+1\\
           \gamma(\rho+\kappa)&\in \zeta E
                \end{aligned}\right.,
    \end{equation*}
 \begin{equation*}
         \left\{\begin{aligned}
              \gamma\rho^{-1}(\kappa+1)&\in \zeta E  +1    \\
           \gamma^{-1}\kappa^{-1}(\rho+1) & \in \zeta E+1\\
           \gamma(\rho+\kappa)&\in \zeta E
                \end{aligned}\right.,
\quad        \left\{\begin{aligned}
              \gamma(\rho+\kappa)&\in \zeta E   \\
           \gamma^{-1}\kappa^{-1}(\rho+1)& \in \zeta E+1\\
           \gamma\rho^{-1}(\kappa+1)&\in \zeta E+1
                \end{aligned}\right.,
   \quad                \left\{\begin{aligned}
              \gamma(\rho+\kappa)&\in \zeta E     \\
           \gamma^{-1}\rho^{-1}(\kappa+1) & \in \zeta E+1\\
           \gamma\kappa^{-1}(\rho+1)&\in \zeta E+1
                \end{aligned}\right.,
    \end{equation*}
which implies $\rho+\kappa \in \gamma \zeta E \cap \gamma^{-1} \zeta E$.

\medskip

\underline{Case 4:} For all $s_i$, $s_i\in \eqref{eqmgf2'}$.
In this case, $\rho+\kappa \in \gamma^{-1}(\zeta E+1)\cap \gamma \zeta E$.

\medskip

Put $H=\zeta E$. Choose $c\in\F^*$ such that
\[
 H=\ker\chi,\qquad
 \chi(t)=\operatorname{Tr}_{\F/\mathbb F_2}(ct).
\]
Since $1\notin H$, we have $\chi(1)=1$. The the assumed emptiness of
$\mathcal M_{ygf}^1$ and $\mathcal M_{ygf}^2$ implies
\begin{equation}\label{eq:global-parity}
 \chi\bigl(\alpha(s)+\alpha(\sigma(s))
                    +\alpha(\tau(s)^f)\bigr)=0,
\end{equation}
where $\alpha(s)$ denotes the first coordinate of $s$ with $\eta(s)=f$.

Put $d=\theta+1$ and
$$ Q=\{(\kappa,\rho)\in\F^2:
                  \kappa^d+\rho^d=1\},\qquad
 Q^+=\{\kappa+\rho:(\kappa,\rho)\in Q\}.$$
The map $t\mapsto t^d$ is a permutation on $\F$, since
$(\theta+1)(\theta-1)=2q-1\equiv1\pmod{q-1}$. Thus $|Q|=q$.
Then, each $t\in Q^+$ has exactly two preimages in $Q$, so
$|Q^+|=q/2$. Also, $0\notin Q^+$, and the two preimages of $1$
are $(0,1)$ and $(1,0)$.

Define
$$ \ell_+(x)=\chi(\gamma x),\qquad
 \ell_-(x)=\chi(\gamma^{-1}x),\qquad
 \nu(x)=\ell_+(x)+\ell_-(x)
           =\chi((\gamma+\gamma^{-1})x).$$

For a representative $(\kappa,\rho)$ of a sextet, put
$$ A=\frac{1+\kappa}{\rho},\qquad
 B=\kappa+\rho,\qquad
 C=\frac{1+\rho}{\kappa}.$$
Here $\kappa\rho\ne0$. The simultaneous conditions in the four
sextet cases give
$$
\begin{array}{c|c|c|c}
 &(\ell_+(A),\ell_+(B),\ell_+(C))
 &(\ell_-(A),\ell_-(B),\ell_-(C))
 &(\nu(A),\nu(B),\nu(C))\\ \hline
 \text{Case 1}&(0,1,0)&(1,0,1)&(1,1,1)\\
 \text{Case 2}&(0,0,0)&(0,0,0)&(0,0,0)\\
 \text{Case 3}&(1,0,1)&(1,0,1)&(0,0,0)\\
 \text{Case 4}&(1,1,1)&(0,0,0)&(1,1,1)
\end{array}
$$
In particular, in every case,$$ \nu(A)=\nu(B)=\nu(C).$$
We next verify the identity relating these three values.
Applying $\theta$ to \eqref{eqrho}, we obtain
$$ 1=\kappa^{\theta+2}+\rho^{\theta+2}.$$
Multiplying the original equation by $B=\kappa+\rho$, we obtain
$$
\begin{aligned}
 B
 &= (\kappa+\rho)
       (\kappa^{\theta+1}+\rho^{\theta+1})\\
 &= \kappa^{\theta+2}+\rho^{\theta+2}
       +\kappa\rho(\kappa^\theta+\rho^\theta)\\
 &=1+\kappa\rho B^\theta.
\end{aligned}
$$
It follows that
$$
\begin{aligned}
 A+C
 &=\frac{\kappa+\kappa^2+\rho+\rho^2}{\kappa\rho}\\
 &=\frac{B+B^2}{\kappa\rho}
  =\frac{B(B+1)}{\kappa\rho}
  =B^{\theta+1}=B^d.
\end{aligned}
$$
By linearity of $\nu$,
$$ \nu(B^d)=\nu(A+C)=\nu(A)+\nu(C)=0.$$

Similarly,
$$ B+A=C^d,\qquad B+C=A^d.$$
Indeed, these are the same identity applied to the transformed
pairs $(\kappa^{-1},\rho/\kappa)$ and
$(\rho^{-1},\kappa/\rho)$, respectively.
Consequently,
$$ \nu(A^d)=\nu(B^d)=\nu(C^d)=0.$$

The six parameter pairs in this sextet are
$$ \begin{gathered}
 (\kappa,\rho),\quad(\rho,\kappa),\quad
 (\kappa^{-1},\rho/\kappa),\\
 (\rho^{-1},\kappa/\rho),\quad
 (\rho/\kappa,\kappa^{-1}),\quad
 (\kappa/\rho,\rho^{-1}),
 \end{gathered}
$$
and their sums are, respectively,
$$ B,\quad B,\quad C,\quad A,\quad C,\quad A.$$
Since the sextets exhaust
$Q-\{(0,1),(1,0)\}$, we conclude that, for all $t\in Q^+-\{1\}$,
$$ \nu(t^d)=0.$$

It remains to show  $\nu(1)=0$.
 For  $s=a^h$, the three first coordinates of $s$, $\sigma(s)$ and $\tau(s)^f$ are
$(0,\gamma^{-1},\gamma)$. Hence
$$ \nu(1)=\chi(\gamma+\gamma^{-1})=0.$$
We have therefore proved
$$ \{t^d:t\in Q^+\}\subseteq\ker\nu-\{0\}.$$

Finally, $\gamma+\gamma^{-1}\ne0$, so $\nu$ is a nonzero linear functional.
Thus $|\ker\nu|=q/2$.
On the other hand, the power map $t\mapsto t^d$ is a permutation,
and $|Q^+|=q/2$. The last inclusion would therefore imply
$$ \frac q2
 =\bigl|\{t^d:t\in Q^+\}\bigr|
 \le|\ker\nu-\{0\}|
 =\frac q2-1,$$
a contradiction.
Therefore, for every $f\in K^\#$,
$\mathcal M_{ygf}^1\cup\mathcal M_{ygf}^2\ne\varnothing$.
This completes the proof.
 \end{proof}

\begin{lemma}\label{LemmaD=3}
The graph $\Gamma$ is antipodal and its diameter is $3$.
\end{lemma}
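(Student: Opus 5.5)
By Lemma~\ref{Lemma2}, $\Gamma$ is a $2(q-1)$-cover of $K_{q^2+1}$ with the fibres listed there. By the criterion of Godsil and Hensel \cite[Lemma~3.1]{GodsilHensel}, it therefore suffices to show the following: every vertex $u$ that is non-adjacent to $S_1$ and lies outside the fibre $F(S_1)$ has a common neighbour with $S_1$. Then $\Gamma$ is antipodal of diameter $3$. By Lemma~\ref{st1htc}, $G$ is vertex-transitive, so we may fix the base vertex $S_1$. Its stabiliser $S_1$ acts on the remaining fibres, so we only need to handle one representative vertex in each relevant $S_1$-orbit. Since $G^\Sigma$ is $2$-transitive and $S$ is transitive on the $q^2$ fibres other than $F(S_1)$, the fibre containing $u$ can be moved by $S$ to $F(S_1g)$. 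Passing from $S$ to $S_1$ costs at most a factor of two, which is absorbed by the $\delta$ coordinate. Concretely, we reduce to vertices $u=S_1 g f$ and $u=S_1 y g f$ with $f\in K$, excluding the unique neighbour of $S_1$ in $F(S_1g)$, which is $S_1yg$.

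Next we translate the existence of a common neighbour into the sets $\mathcal M$. A neighbour of $S_1$ outside $F(S_1)\cup F(S_1g)$ has the form $S_1 y^{\delta} g s$ with $s\in S^\#$; here the parity $\delta$ is forced by the adjacency rule $D=S_1(yg)^{\pm1}S_1$. We then compute its adjacency to $S_1 y^{\varepsilon}gf$ using \eqref{tausigmaeta}. The key identity is $g s g=\sigma(s)\eta(s)g\tau(s)$, which gives $(y^\delta g s)(y^\varepsilon g f)^{-1}\in S_1 y^{\pm}gS_1$ exactly when $\eta(s)=f$ and the cosets of $\sigma(s)$ and $\tau(s)^f$ modulo $S_1$ have the right parities. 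Unwinding these parities should reproduce precisely the four definitions. The outcome should be that $S_1gf$ (respectively $S_1ygf$) has a common neighbour with $S_1$ if and only if $\mathcal M_{gf}^1\cup\mathcal M_{gf}^2\neq\varnothing$ (respectively $\mathcal M_{ygf}^1\cup\mathcal M_{ygf}^2\neq\varnothing$). Verifying that the indices match the definitions is routine bookkeeping but must be done carefully. The main point to check is that the swap between $S_1$ and $S_1y$ in the $\sigma$ and $\tau$ conditions corresponds to the two choices of $\delta$, together with the possible need to replace $D$-elements by their inverses.

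Finally, we apply Lemma~\ref{Lemma5}. For $f=1$, the excluded vertex is the neighbour $S_1yg$, and the vertex $S_1g$ is covered by a direct argument: $S_1 y$ is adjacent to $S_1 g$, and $S_1y$ lies in $F(S_1)$, so we instead need a neighbour in a third fibre. Here we would use Lemma~\ref{Lemma4} with $h=1$, taking $s=a$, for which $\eta(a)=1$. For $f\neq1$, note that $|K|$ is odd, so every $f\in K^\#$ equals $h^2$ for a unique $h\in K^\#$. Lemma~\ref{Lemma5}(1)--(4) then shows that one of $\mathcal M_{gf}^i$ is nonempty, and the closing equivalence of Lemma~\ref{Lemma5} upgrades this to nonemptiness of the $\mathcal M_{ygf}^i$ as well. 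Hence every non-neighbour of $S_1$ in another fibre has a common neighbour with $S_1$, and the diameter is $3$. The expected obstacle is the $f=1$ and $\varepsilon$-parity bookkeeping in the translation step, since Lemma~\ref{Lemma5} only treats $h\in K^\#$.
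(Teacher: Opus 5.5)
Your proposal is correct and follows essentially the paper's route: reduce to the fibres $F(S_1)$ and $F(S_1g)$, identify the common neighbours of $S_1$ and $S_1y^{\varepsilon}gf$ with the sets $\mathcal M_{gf}^i$ (for $\varepsilon=0$) or $\mathcal M_{ygf}^i$ (for $\varepsilon=1$) via \eqref{tausigmaeta}, treat $f=1$ with $s=a$, and conclude by Lemma~\ref{Lemma5}. You differ only in using the Godsil--Hensel criterion instead of the paper's separate check that each fibre has mutual distance $3$, and Lemma~\ref{Lemma5}(1)--(4) instead of its fixed-point argument for the involution $gf$ on the $\mu$-subgraph of $\Gamma'$.

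The one phrase to tighten is ``absorbed by the $\delta$ coordinate''. Since $S_1$ has two orbits on the other fibres, moving by $S$ can turn the base vertex into $S_1y$. To return to base $S_1$ you need the fibre-preserving automorphism $S_1x\mapsto S_1yx$ (valid because $y$ normalises $S_1$ and $D^y=D$). Alternatively, do the parallel computation for base $S_1y$, which just interchanges the roles of $\mathcal M_{gf}$ and $\mathcal M_{ygf}$.
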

\begin{proof}
By the choice of elements $y$ and $g$, we have $$S_1\sim S_1 y g a \sim S_1 g \sim S_1 y \sim S_1 g a \sim S_1 y g \sim S_1,$$ therefore $d(S_1, S_1 g) = 2 = d(S_1 y, S_1 y g)$
and $d(S_1, S_1 y) = 3 = d(S_1 g, S_1 y g)$.

Now we will show that $d(S_1, S_1 f) = d(S_1, S_1 y f) = 3$ for all $f \in K^{\#}$. Since $d(\Gamma') = 3$, then $d_{\Gamma}(S_1, \{S_1 f, S_1 y f\}) = 3$, i.e., in $\Gamma$ there exists a path of length $3$ from $S_1$
to $S_1 y^{\delta} f$, where $\delta \in \{0,1\}$.

Note that in $S_1\setminus Z(S)$, there exists an element $s_1$ such that ${s_1}^{f^{-1}} \notin S_1$.
By contradiction, suppose there exists an element $f \in K^{\#}$ such that for every element $s_1 \in S_1 - Z(S)$, the inclusion ${s_1}^{f^{-1}} \in S_1$ holds.
Then $f$ lies in $N_K(S_1)$ and we may assume that it is its generator. Since $N_K(S_1)$ acts (by conjugation) faithfully and semi-regularly on $S/Z(S)$, the set $S_1/Z(S)$ is a union of certain $N_K(S_1)$-orbits under this action. Therefore  $|f|$ divides $\gcd(q/2 - 1, q - 1) = 1$. Contradiction.

Now  we have
$$S_1 f s_1 = S_1 {s_1}^{f^{-1}} f = S_1 y f$$
and
$$S_1 y f s_1 = S_1 y {s_1}^{f^{-1}} f = S_1 f.$$
Therefore, any path of length $3$ from $S_1$ to $S_1 y^{\delta} f$, where $\delta \in \{0,1\}$, is mapped by the element $s_1$ to a path (of the same length) from $S_1$ to $S_1 y^{\delta + 1} f$.

From the above proof and by the transitivity of $G$ on the vertices of  $\Gamma$, it follows that each fibre consists of vertices mutually at distance $3$ in $\Gamma$.

It remains to prove that any two non-adjacent vertices from different fibres have at least one common neighbour.
By $2$-transitivity of $G$ on the set of fibres of  $\Gamma$, it suffices to consider its fibres containing the vertices $S_1$ and $S_1 g$.

Since $\Gamma'$ is a $(q^2 + 1, q - 1, q + 1)$-cover, any two of its vertices from distinct fibres have exactly $q+1$ common neighbours.
In particular, this holds for the vertices $S$ and $S g f$ for any $f \in K^{\#}$, and the involution $g f$ fixes exactly one vertex $S x$ in
the $\mu$-subgraph ${\Gamma'}_1(S) \cap {\Gamma'}_1(S g f)$.
Since $G$ contains the only class of involutions, the involution $g f$ fixes both vertices $S_1 x$ and $S_1 y x$ in $\Gamma$.
From this, it follows that $d_{\Gamma}(S_1, S_1 g) = 2 = d_{\Gamma}(S_1 y, S_1 y g)$.

It remains to prove that $d_{\Gamma}(S_1, S_1 y g f) = 2 = d_{\Gamma}(S_1 y, S_1 g f)$ for all $f \in K^{\#}$.

Recall $\sigma, \tau: S^{\#} \to S$ and $\eta: S^{\#} \to K$ are the three functions defined by the rule \eqref{tausigmaeta}.
By the definition of   $\Gamma$, we have
$${\Gamma}_1(S_1) = \{ S_1 y g s \mid s \in S_1 \} \cup \{ S_1 g y s \mid s \in S_1 \}$$
and
$${\Gamma}_1(S_1 y) = \{ S_1 g s \mid s \in S_1 \} \cup \{ S_1 y g y s \mid s \in S_1 \}.$$

Note that, for an arbitrary $f \in K^{\#}$,
$${\Gamma'}_1(S) \cap {\Gamma'}_1(S g f) = \{ S g s \mid s \in S^{\#}, \ \eta(s) = f \}.$$
Moreover, for each $f \in K^{\#}$, there are exactly $q + 1$ elements $s \in S^{\#}$ such that $\eta(s) = f$.

Now we fix an arbitrary element $f \in K$ and consider the preimages of the subgraph ${\Gamma'}_1(S) \cap {\Gamma'}_1(S g f)$ in $\Gamma$.

Observe that, for each $s \in S_1^{\#}$,
$$y g s (g f)^{-1} = y \sigma(s) \eta(s) g \tau(s) f = y \sigma(s) \eta(s) f^{-1} g \tau(s)^f \in S_1 y g S_1 \cup S_1 g y S_1   \iff  s \in \mathcal{M}_{gf}^1.$$

Similarly, for each $s \in S_1$,
$$g y s (g f)^{-1} = \sigma(y s) \eta(y s) g \tau(y s) f = \sigma(y s) \eta(y s) f^{-1} g \tau(y s)^f \in S_1 y g S_1 \cup S_1 g y S_1  \iff  y s \in \mathcal{M}_{gf}^2.$$

Therefore,
$${\Gamma}_1(S_1) \cap {\Gamma}_1(S_1 g f) = \{ S_1 y g s \mid s \in \mathcal{M}_{gf}^1 \} \cup \{ S_1 g y s \mid y s \in \mathcal{M}_{gf}^2 \}.$$
Similarly, for each $s \in S_1^{\#}$,
$$y g s (y g f)^{-1} = y g s g f y^{-1} = y \sigma(s) \eta(s) f^{-1} g \tau(s)^f y^{-1} \in S_1 y g S_1 \cup S_1 g y S_1   \iff s \in \mathcal{M}_{ygf}^1.$$

And, for each $s \in S_1$,
$$g y s (y g f)^{-1} = g y s g f y^{-1} = 
\sigma(ys) \eta(ys) f^{-1} g \tau(ys)^f y^{-1} \in S_1 y g S_1 \cup S_1 g y S_1   \iff   y s \in \mathcal{M}_{ygf}^2.$$

Hence
$${\Gamma}_1(S_1) \cap {\Gamma}_1(S_1 y g f) = \{ S_1 y g s \mid s \in \mathcal{M}_{ygf}^1 \} \cup \{ S_1 g y s \mid y s \in \mathcal{M}_{ygf}^2 \}.$$

Now in view of Lemma \ref{Lemma5}, for all $f \in K^{\#}$,
$${\mathcal M}_{gf}^1 \cup {\mathcal M}_{gf}^2 \neq \varnothing \text{ and } {\mathcal M}_{ygf}^1 \cup {\mathcal M}_{ygf}^2 \neq \varnothing.$$
and it follows that any two non-adjacent vertices from different fibres of $\Gamma$ have at least one common neighbour.
Hence $d(\Gamma) = 3$, and the relation "to coincide or be at distance $3$" is an equivalence relation on the set of vertices of  $\Gamma$.
Consequently, $\Gamma$ is an antipodal $2(q-1)$-cover of the complete graph on $q^2 + 1$ vertices.
The lemma is proved.
\end{proof}

\begin{lemma}\label{Lemma1b} The order of the kernel $R$ of the action of the group $\aut(\Gamma)$ on the set of antipodal fibres of  $\Gamma$ is equal to $2$, and $\aut(\Gamma)\le \aut(Sz(q)) \times R$.
Moreover, $\aut(\Gamma)$ is arc-transitive.
\end{lemma}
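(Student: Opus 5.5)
We need three things: $|R|=2$, the embedding $\aut(\Gamma)\le \aut(Sz(q))\times R$, and an automorphism interchanging the two $G$-orbits on arcs.

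\emph{The kernel $R$.} An element of $R$ fixes every fibre. Since two distinct fibres induce a perfect matching, an element of $R$ is determined by its image of a single vertex: once $v$ is mapped to $v'$ in the fibre $F(v)$, the image of each neighbour of $v$ in another fibre is forced, and connectivity (Lemma~\ref{st1htc}) propagates this. Hence $R$ acts semiregularly on every fibre and $|R|\le 2(q-1)$.

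To get $|R|\le 2$, I would use that $G$ normalises $R$ and $G\cap R=1$ (as $G$ is simple and faithful on fibres). Then $R$ commutes with $G$ modulo the centraliser, so $R\le C_{\aut(\Gamma)}(G)$ up to a subgroup of $\aut(R)$. Since $|R|$ divides $2(q-1)$ and $G$ has no nontrivial small quotients, $G$ centralises $R$. Therefore $R$ embeds into the centraliser of $G$ in $\sym(\Omega)$, which is isomorphic to $N_G(S_1)/S_1$. Next, $N_G(S_1)\le M=S:K$, and an element $k\in K^{\#}$ normalises $S_1$ only if $H$ is $k$-stable; the gcd argument of Lemma~\ref{LemmaD=3} rules this out. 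So $N_G(S_1)=S$ and the centraliser is $S/S_1\cong Z_2$, generated by the map $\iota\colon S_1x\mapsto S_1yx$. This shows $|R|\le 2$.

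For $|R|\ge 2$, I check that $\iota$ is an automorphism. By Lemma~\ref{Lemma2}, $\iota$ preserves fibres, so the question is whether it preserves adjacency: is $S_1x\sim S_1z$ equivalent to $yxz^{-1}y^{-1}\in D$? This needs $yDy^{-1}=D$, and that identity is the \textbf{main obstacle}. Using $S=S_1\cup S_1y$, we have $yS_1=S_1y$ because $S_1\trianglelefteq S$. Then $y\,S_1ygS_1\,y^{-1}=S_1y^2gS_1y^{-1}=S_1agy^{-1}S_1$. I would try to rewrite $agy^{-1}$ via \eqref{StrIde}, namely $gag=y^{-1}gy$, which gives $ag=gy^{-1}gyg$, together with $y^2=a$ and $|yg|=4$, to show that $S_1agy^{-1}S_1$ equals $S_1(yg)^{\pm1}S_1$. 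If this fails, I would instead use the fibre/matching description: $\iota$ realises the deck involution over $\Gamma'$ and swaps the two edges $\{S_1,S_1yg\}$ and $\{S_1y,S_1g\}$ above the edge $\{S,Sg\}$, so it preserves $E\Gamma$. Either way, $R=\langle\iota\rangle$ and $|R|=2$.

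\emph{The embedding.} $\aut(\Gamma)/R$ acts faithfully on the $q^2+1$ fibres and contains $G^{\Sigma}$, which is $2$-transitive. The overgroups of $Sz(q)$ in its $2$-transitive degree-$(q^2+1)$ action lie in $\aut(Sz(q))$; the alternative is that $\aut(\Gamma)/R$ contains $A_{q^2+1}$, and I would exclude this by order considerations, since the stabiliser of two fibres must act faithfully on the matching structure of size $2(q-1)$. Hence $\aut(\Gamma)/R\le\aut(Sz(q))$. Because $R$ is central of order $2$ and $G$ is perfect, the preimage of $\soc=G$ is $G\times R$, and the extension splits. This gives $\aut(\Gamma)\le \aut(Sz(q))\times R$.

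\emph{Arc-transitivity.} The elements of $\aut(Sz(q))$ are field automorphisms, and $G$ is half-transitive by Lemma~\ref{st1htc}. The element $\iota$ maps the arc $(S_1,S_1yg)$ to $(S_1y,S_1g)$. Composing with $g$, which sends $S_1y\mapsto S_1yg$ and $S_1g\mapsto S_1$, gives the arc $(S_1yg,S_1)$, the reverse of the original. So $\langle G,\iota\rangle$ is arc-transitive, and hence so is $\aut(\Gamma)$.
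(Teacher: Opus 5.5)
Your outline works, and your proof that $|R|=2$ is the paper's argument in a leaner form. The paper applies the minimal-degree argument only to $O=O_{2'}(R)$ and gets $O\le C_{\sym(\Omega)}(G)\cong N_G(S_1)/S_1\cong Z_2$. It then needs Burnside's normal $p$-complement theorem to reach $R=\langle w\rangle$. Applying the centraliser bound to $R$ itself, as you do, makes Burnside unnecessary. Two local repairs are needed in that part.

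First, ``$G$ has no nontrivial small quotients'' should be stated precisely, as in the paper. A nontrivial conjugation action would embed $G$ in $\aut(R)\le\sym(R^{\#})$, giving a faithful action of degree $|R|-1\le 2q-3<q^2+1$, which is below the minimal degree of $Sz(q)$. Comparing $|G|$ with $|\aut(R)|$ is not obviously enough.

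Second, your ``main obstacle'' $yDy^{-1}=D$ is not an obstacle. Since $[S:S_1]=2$, we have $a=y^2\in S_1$. Your own computation then gives $yS_1ygS_1y^{-1}=S_1gy^{-1}S_1=S_1(yg)^{-1}S_1$, and likewise $yS_1gy^{-1}S_1y^{-1}=S_1ygS_1$. The structure identity is not needed. You already use $a\in S_1$ implicitly in the arc reversal. That computation ($\iota$ followed by $g$ sends $(S_1,S_1yg)$ to $(S_1yg,S_1)$) is correct and more explicit than the paper, which only asserts that $R\times G$ is arc-transitive.

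The embedding is where your route genuinely differs. The paper notes that the orbits of $R$ are the blocks of $\mathcal B$, so $\aut(\Gamma)/R$ acts faithfully on $\Gamma/\mathcal B=\Gamma'$, and then quotes $\aut(\Gamma')\cong\aut(Sz(q))$. You instead use the CFSG-dependent list of $2$-transitive groups of degree $q^2+1$ containing $Sz(q)$. This avoids needing $\aut(\Gamma')$, but it leaves you to exclude $\aut(\Gamma)^\Sigma\ge A_{q^2+1}$.

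Your stated reason for that exclusion does not work. Nothing established so far shows that a two-fibre stabiliser acts faithfully on the $2(q-1)$ matching edges. Under the hypothesis $\aut(\Gamma)^\Sigma\ge A_{q^2+1}$, the kernel of that action would involve $A_{q^2-1}$, so assuming faithfulness is assuming what you want to prove.

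A correct order argument goes as follows. Put $X=\aut(\Gamma)$, let $F$ be a fibre and $v\in F$. Then $X_F$ is transitive on $F$, $R\le X_F$, and $X_v\cap R=1$. Hence $X_vR/R$ has index $2(q-1)/2=q-1$ in $X_F/R=X^\Sigma_F$. If $X^\Sigma\ge A_{q^2+1}$, then $X^\Sigma_F$ is $A_{q^2}$ or $S_{q^2}$. Neither has a subgroup of index $k$ with $2<k<q^2$, which contradicts $2<q-1<q^2$.

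Finally, $G$ being perfect does not by itself split the extension. What you need is that $|X:GR|$ divides $2e+1$, which is odd. Then $X/G\cong Z_2\times X_0/G$ with $|X_0/G|$ odd, so $X=X_0\times R$ with $X_0\cong X/R\le\aut(Sz(q))$.
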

\begin{proof}
By the preceding lemma,  $\aut(\Gamma)$ preserves $\Sigma$, and the subgroup $R$ is well-defined. Note that $R$ acts semiregularly on vertices. Indeed, an element of $R$ fixing a vertex fixes its unique neighbour in each other fibre, since the edges between any two fibres form a perfect matching. Connectivity
then implies that it fixes every vertex. Hence $|R|$ divides $2(q-1)$.

Since $S_1\trianglelefteq S$, $y^2\in S_1$, and $D^y=D$, the mapping $$w:S_1x\longmapsto S_1yx$$
defines an involutory automorphism of $\Gamma$. It commutes with the right multiplication action of $G$ and interchanges the two vertices
of every block of $\mathcal B$. Since each such block lies in an antipodal fibre, $w\in R$.

Let $O=O_{2'}(R)$. Then $O\mathrel{\mathrm{char}}R\trianglelefteq \aut(\Gamma)$, so $G$ acts on $O$ by conjugation. Suppose this action is nontrivial.
Then simplicity of $G$ implies that it is faithful of degree
$$|O|\le q-1<q^2+1,$$
contrary to the minimal permutation degree of $Sz(q)$ (see \cite[p.~117]{Suz62}). Hence $[O,G]=1$.

As $N_G(S_1)=S$, it follows that
$$O\le C_{\sym(\Omega)}(G)\simeq N_G(S_1)/S_1\simeq Z_2,$$
so $O=1$.

Since $w\in R$ and $|R|\mid 2(q-1)$, a Sylow $2$-subgroup $P$ of $R$ has order $2$. In particular, $P\le Z(N_R(P))$, so Burnside's
normal $2$-complement theorem gives a normal Hall $2'$-subgroup of $R$. This subgroup is $O_{2'}(R)=1$. Consequently,
$$ R=\langle w\rangle\simeq Z_2. $$

Now $R\trianglelefteq \aut(\Gamma)$, and its orbits are precisely the blocks of $\mathcal B$. Therefore $\aut(\Gamma)$ preserves $\mathcal B$ and induces
a group of automorphisms of $\Gamma/\mathcal B=\Gamma'$. The kernel of this action is exactly $R$: it contains $R$, and any element fixing every
block of $\mathcal B$ also fixes every fibre in $\Sigma$, as $\mathcal B$ refines $\Sigma$. Thus
$$ \aut(\Gamma)/R\simeq \aut(\Gamma)^{\mathcal B}\le\aut(\Gamma')\simeq\aut(G), $$ where the latter isomorphism stated in \cite{Tsi22c}.
In particular, $\widetilde{G}:=\soc(\aut(\Gamma))=R\times G$ is arc-transitive.

\medskip

Let    $H$ be the stabiliser of the vertex $S_1$ in $\widetilde{G}$.
Then $H = S_1 \cup S_1 y w$, $|H| = q^2$, and $\Gamma \simeq \Cos(\widetilde{G}, H, HwgH)$.
For each element $f \in K^{\#}$, we have that $S_1 \cap S_1^f$ is a subgroup of index $2$ in $S_1$, containing $Z(S)$,
and $H \cap H^f = (S_1 \cap S_1^f) \cup ((S_1 y \cap (S_1 y)^f) w)$ is a subgroup of index $2$ in $H$, which fixes pointwise
two blocks $\{S_1, S_1 y\}$ and $\{S_1 f, S_1 y f\}$ and has exactly two orbits of length $q^2/2$ on the neighbourhood of any vertex $S_1 x$ from these two blocks, say $\Delta_1^x$ and $\Delta_2^x$, such that the set of fibres intersecting $\Delta_i^x$ coincides with the set of fibres intersecting $\Delta_i$.

\end{proof}
\begin{remark}

Note that, the additional representation of $\Gamma$ as the coset graph $\Cos(\widetilde{G}, H, HwgH)$ does not seem to provide a simpler way to deduce that the diameter  is $3$. This property of $\Gamma$ instead appears to be a consequence of the internal structure of the group $G=Sz(q)$, discussed in the proof of Lemma~\ref{Lemma5}.
\end{remark}

\section{Acknowledgements}
The work  is  supported by the Ministry of Science and Higher Education of the
Russian Federation, agreement No 075-02-2026-1315, for the development of the regional scientific
and educational mathematical center “Ural Mathematical Center”.

 \end{document}